\documentclass[11pt]{article}

\usepackage[T1]{fontenc}
\usepackage[utf8]{inputenc}
\usepackage{lmodern}
\usepackage[english]{babel}
\usepackage{amsmath,amssymb,amsthm}
\usepackage{graphicx}
\usepackage{tikz}

\newtheorem{theorem}{Theorem}
\newtheorem{lemma}[theorem]{Lemma}
\newtheorem{proposition}[theorem]{Proposition}
\newtheorem{corollary}[theorem]{Corollary}
\theoremstyle{definition}
\newtheorem{fact}[theorem]{Fact}
\theoremstyle{remark}
\newtheorem{remark}[theorem]{Remark}

\usepackage{xcolor}
\definecolor{G}{rgb}{0.0, 0.5, 0.0}
\definecolor{B}{rgb}{0.4, 0.2, 0.1}
\definecolor{BL}{rgb}{0.0, 0.3, 1.0}
\definecolor{L}{rgb}{0.6, 0.0, 0.8}
\definecolor{O}{rgb}{0.8, 0.3, 0.0}
\definecolor{R}{rgb}{0.8, 0.0, 0.0}
\definecolor{LINK}{rgb}{0.35, 0.15, 0.1}
\definecolor{URL}{rgb}{0.6, 0.2, 0.7}

\usepackage{caption}
\usepackage[linkcolor=LINK, urlcolor=URL, citecolor=magenta]{hyperref}
\usepackage[all]{hypcap}
\hypersetup{linktocpage,colorlinks=true,pdfborder={0 0 0}}

\usepackage{geometry}
\usepackage{booktabs}
\newcommand{\C}{\mathbb{C}}
\newcommand{\adj}{\sim}

\newcommand{\w}{\omega}
\newcommand{\wb}{\bar{\omega}}

\title{Harborth's theorem $p(5)=18$ revisited\\[5mm]}

\author{Mike Winkler\\[5mm]
	Faculty of Mathematics\\Ruhr University Bochum, Germany\\
	mike.winkler@ruhr-uni-bochum.de}

\date{October 28, 2026}

\begin{document}
	\maketitle
	
	\begin{abstract}
		Let $p(n)$ be the minimum cardinality of a finite planar point set in which each point has exactly $n$ unit-distance neighbours. Harborth proved $p(5)=18$ in work presented in 1985 and published in 1987, using a detailed case analysis. We revisit the proof using convex-hull arguments, incidence counts, and elementary properties of intersections of unit circles. These tools give an alternative treatment of configurations with $14$ and $16$ points and organize the final case distinction by a common vector-sum structure. We also treat a configuration not explicitly covered by the case analysis in the online PDF copy of Harborth's paper available to us, and give an explicit justification of the six-point classification used in the argument. No result subsequent to Harborth's proof is required.
	\end{abstract}
	
	\par\smallskip
	\noindent\textit{Keywords.} Unit-distance graphs, regular point sets, regular graphs, Harborth's theorem.
	\par\smallskip
	\noindent\textit{2020 Mathematics Subject Classification.} 05C10 (primary); 52C10 (secondary).

	\section{Introduction}
	
	A finite set $S\subset\C$ is called \emph{$n$-regular} if every point of $S$ has distance $1$ to exactly $n$ other points of $S$. Let $p(n)$ denote the minimum cardinality of an $n$-regular set.
	
	\begin{theorem}[Harborth \cite{H}]\label{thm:main}
		$p(5)=18$.
	\end{theorem}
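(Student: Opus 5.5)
The proof splits into an upper bound (an explicit construction) and a lower bound (ruling out every $5$-regular set with at most $17$ points).

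\textbf{Upper bound.} I would exhibit an explicit $18$-point $5$-regular set in $\C$. Natural candidates combine unit vectors such as $1,\w,\w^2$ (with $\w=e^{i\pi/3}$) with a second rotated unit triangle. Taking Minkowski-type sums of unit vectors produces many coincident unit distances. Harborth's configuration has such a symmetric description. I would list the $18$ points by coordinates in $\mathbb{Q}(\w,e^{i\theta})$ for a suitable angle $\theta$. Then I would check, using the dihedral symmetry, that a representative of each orbit has exactly five unit neighbours.

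\textbf{Lower bound: counting and parity.} Let $S$ be $5$-regular with $|S|=m\le 17$. The unit-distance graph has $5m/2$ edges, so $m$ is even and $m\le 16$. Two unit circles meet in at most two points, so the graph contains no $K_{2,3}$. The Kővári--Sós--Turán-type incidence count then gives
\[
m\binom{5}{2}\le 2\binom{m}{2},
\]
hence $m\ge 11$. Together with parity this leaves $m\in\{12,14,16\}$. Next I would use the convex hull. A vertex $v$ of $\operatorname{conv}(S)$ has its five neighbours inside an angle of less than $\pi$. Hull vertices with very small interior angle force strong local structure, since the five unit neighbours lie on an arc of the unit circle around $v$, subtending less than $\pi$.

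\textbf{Local structure and case analysis.} The key lemma is the classification of the neighbourhood of an extreme point, including the six-point configuration that consists of $v$ and its five neighbours. If the five neighbours lie in a half-plane and every point has degree five, then consecutive neighbours must be adjacent or share further common neighbours, because each neighbour also needs five unit neighbours. I would show that the neighbours form a chain of rhombi, so that $S$ locally contains vector sums $v+a_i+a_j$ of unit vectors $a_i$. Propagating this around the hull should show that $S$ contains a structure of the form $\{x+\sum \epsilon_i a_i\}$ for three or four unit directions. Counting the points forced by this structure then gives more than $m$ points for $m=12$ and $m=14$. For $m=16$, I would first bound the number of hull vertices using the angle sum $\sum(\pi-\alpha_v)=2\pi$. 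This forces a short list of hull shapes, and for each I would derive a contradiction by closing the vector-sum structure and using that unit circles meet in at most two points.

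I expect the $m=16$ case to be the main obstacle. The structure there is nearly tight, so simple counting does not suffice, and each residual configuration must be excluded by an explicit algebraic identity among unit vectors. My first attempt would be to reduce each residual configuration to a relation $a_1+a_2+a_3=b_1+b_2$ of unit vectors. Such relations have finitely many solution types up to rotation, and I would check each type directly.
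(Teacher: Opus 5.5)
Your lower-bound argument has a genuine gap. For the decisive cases $m=14$ and $m=16$ it contains no working mechanism, and the tools you name would fail.

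The ``key lemma'' says the neighbours of a hull vertex form a chain of rhombi that propagates to a set $\{x+\sum\epsilon_i a_i\}$. This is the whole difficulty, and you assert it rather than argue it. In its naive form, where every second neighbour is a sum $z_i+z_j$, it is false. Let $0$ be a hull vertex with neighbour set $Z$ and $e$ unit distances inside $Z$, and let $a,b,c$ count the points outside $\{0\}\cup Z$ with two, one, zero neighbours in $Z$. Counting incidences gives $2a+b=20-2e$ and $a+b+c=m-6$. For $m=16$, $e=3$ this yields $c=a-4$ and $b=14-2a\ge0$. So at least three points are not rhombus completions, nothing fixes their positions, and the propagation cannot close.

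The same count, with $e\le3$ at a hull vertex by (P2), gives $m\ge13$ at once. That is how $m=12$ should be discarded; your $K_{2,3}$ bound $m\ge11$ is too weak for this. Even $m=14$ needs a real case analysis on $e\in\{2,3\}$ using Lemma~\ref{lem:B} and Lemma~\ref{lem:five-sums}.

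Your plan for $m=16$ also has two concrete failures.
\begin{itemize}
\item The identity $\sum_v(\pi-\alpha_v)=2\pi$ bounds the number of hull vertices from above only if exterior angles are bounded below. The spread of a hull vertex's neighbours bounds its interior angle from below, hence the exterior angle from above. That is the wrong direction, so no short list of hull shapes results.
\item The relation $a_1+a_2+a_3=b_1+b_2$ for unit vectors says that $a_1,a_2,a_3,-b_1,-b_2$ close up an equilateral pentagon. These form a two-parameter family up to rotation, so there is no finite list of solution types to check.
\end{itemize}

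The missing idea is triangle counting, which is what pins directions to the Eisenstein lattice. For $m=16$ the paper first shows $t(v)\ge2$ for every $v$ (Lemma~\ref{lem:two}). If $e\le1$, almost all outside points are sums $s_{ij}$. A sum is adjacent to a sum sharing an index only across an edge of $Z$. It has two neighbours among sums with disjoint index pair only if $z_i+z_j=z_r+z_s+z_t$, which by Lemma~\ref{lem:B} happens for at most one pair. The degree count then breaks.

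Since $\sum_v t(v)=3T$ and $32$ is not divisible by $3$, some $P$ has $t(P)\ge3$. By (P2), the neighbours of $P$ then contain either four consecutive hexagon directions or a three-vertex path plus a disjoint edge. Now at least three directions are sixth roots of unity, so the candidate sums are finitely many explicit positions. The cases close by missing-sum and supply counts, Corollary~\ref{cor:six-prism}, and Lemma~\ref{lem:eisenstein-circle}. Relations like yours enter only through the uniqueness statement of Lemma~\ref{lem:B}, never through a classification.

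Your upper bound is also only a plan. What is needed is the generic Minkowski sum $\{0,u\}+\Delta_1+\Delta_2$ of a unit segment and two unit triangles; you omit the segment factor. The point is to avoid coincidences, not to create them. For all but finitely many relative rotations the sum map is injective and adds no new unit distances, so degrees add to $1+2+2=5$. No symmetry or special angle is required.
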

	
	The first values are $p(1)=2$, $p(2)=3$, $p(3)=6$, $p(4)=9$, and $p(5)=18$. Figure~\ref{fig:small-regular} shows realizations for $n=3,4,5$. These examples illustrate the Minkowski-sum constructions used for the upper bounds.
	
	\begin{figure}[htbp]
		\centering
		\begin{tikzpicture}[x=1cm,y=1cm]
			\coordinate (p3v0) at (0.891103,1.471103);
			\coordinate (p3v1) at (2.068897,0.791103);
			\coordinate (p3v2) at (2.251103,1.471103);
			\coordinate (p3v3) at (3.428897,0.791103);
			\coordinate (p3v4) at (1.571103,2.648897);
			\coordinate (p3v5) at (2.748897,1.968897);
			\draw[line width=0.42pt] (p3v0) -- (p3v1);
			\draw[line width=0.42pt] (p3v0) -- (p3v2);
			\draw[line width=0.42pt] (p3v0) -- (p3v4);
			\draw[line width=0.42pt] (p3v1) -- (p3v3);
			\draw[line width=0.42pt] (p3v1) -- (p3v5);
			\draw[line width=0.42pt] (p3v2) -- (p3v3);
			\draw[line width=0.42pt] (p3v2) -- (p3v4);
			\draw[line width=0.42pt] (p3v3) -- (p3v5);
			\draw[line width=0.42pt] (p3v4) -- (p3v5);
			\fill (p3v0) circle (1.10pt);
			\fill (p3v1) circle (1.10pt);
			\fill (p3v2) circle (1.10pt);
			\fill (p3v3) circle (1.10pt);
			\fill (p3v4) circle (1.10pt);
			\fill (p3v5) circle (1.10pt);
			\node[font=\small] at (2.160,-0.35) {(a) $p(3)=6$};
			\coordinate (p4v0) at (5.485840,0.245840);
			\coordinate (p4v1) at (6.854160,1.035840);
			\coordinate (p4v2) at (5.485840,1.825840);
			\coordinate (p4v3) at (7.065840,0.245840);
			\coordinate (p4v4) at (8.434160,1.035840);
			\coordinate (p4v5) at (7.065840,1.825840);
			\coordinate (p4v6) at (6.275840,1.614160);
			\coordinate (p4v7) at (7.644160,2.404160);
			\coordinate (p4v8) at (6.275840,3.194160);
			\draw[line width=0.42pt] (p4v0) -- (p4v1);
			\draw[line width=0.42pt] (p4v0) -- (p4v2);
			\draw[line width=0.42pt] (p4v0) -- (p4v3);
			\draw[line width=0.42pt] (p4v0) -- (p4v6);
			\draw[line width=0.42pt] (p4v1) -- (p4v2);
			\draw[line width=0.42pt] (p4v1) -- (p4v4);
			\draw[line width=0.42pt] (p4v1) -- (p4v7);
			\draw[line width=0.42pt] (p4v2) -- (p4v5);
			\draw[line width=0.42pt] (p4v2) -- (p4v8);
			\draw[line width=0.42pt] (p4v3) -- (p4v4);
			\draw[line width=0.42pt] (p4v3) -- (p4v5);
			\draw[line width=0.42pt] (p4v3) -- (p4v6);
			\draw[line width=0.42pt] (p4v4) -- (p4v5);
			\draw[line width=0.42pt] (p4v4) -- (p4v7);
			\draw[line width=0.42pt] (p4v5) -- (p4v8);
			\draw[line width=0.42pt] (p4v6) -- (p4v7);
			\draw[line width=0.42pt] (p4v6) -- (p4v8);
			\draw[line width=0.42pt] (p4v7) -- (p4v8);
			\fill (p4v0) circle (1.10pt);
			\fill (p4v1) circle (1.10pt);
			\fill (p4v2) circle (1.10pt);
			\fill (p4v3) circle (1.10pt);
			\fill (p4v4) circle (1.10pt);
			\fill (p4v5) circle (1.10pt);
			\fill (p4v6) circle (1.10pt);
			\fill (p4v7) circle (1.10pt);
			\fill (p4v8) circle (1.10pt);
			\node[font=\small] at (6.960,-0.35) {(b) $p(4)=9$};
			\coordinate (p5v0) at (11.507338,0.737399);
			\coordinate (p5v1) at (10.285737,0.292773);
			\coordinate (p5v2) at (12.517627,1.555516);
			\coordinate (p5v3) at (11.296027,1.110889);
			\coordinate (p5v4) at (11.303973,2.021394);
			\coordinate (p5v5) at (10.082373,1.576768);
			\coordinate (p5v6) at (12.807338,0.737399);
			\coordinate (p5v7) at (11.585737,0.292773);
			\coordinate (p5v8) at (13.817627,1.555516);
			\coordinate (p5v9) at (12.596027,1.110889);
			\coordinate (p5v10) at (12.603973,2.021394);
			\coordinate (p5v11) at (11.382373,1.576768);
			\coordinate (p5v12) at (12.157338,1.863232);
			\coordinate (p5v13) at (10.935737,1.418606);
			\coordinate (p5v14) at (13.167627,2.681349);
			\coordinate (p5v15) at (11.946027,2.236723);
			\coordinate (p5v16) at (11.953973,3.147227);
			\coordinate (p5v17) at (10.732373,2.702601);
			\draw[line width=0.42pt] (p5v0) -- (p5v1);
			\draw[line width=0.42pt] (p5v0) -- (p5v2);
			\draw[line width=0.42pt] (p5v0) -- (p5v4);
			\draw[line width=0.42pt] (p5v0) -- (p5v6);
			\draw[line width=0.42pt] (p5v0) -- (p5v12);
			\draw[line width=0.42pt] (p5v1) -- (p5v3);
			\draw[line width=0.42pt] (p5v1) -- (p5v5);
			\draw[line width=0.42pt] (p5v1) -- (p5v7);
			\draw[line width=0.42pt] (p5v1) -- (p5v13);
			\draw[line width=0.42pt] (p5v2) -- (p5v3);
			\draw[line width=0.42pt] (p5v2) -- (p5v4);
			\draw[line width=0.42pt] (p5v2) -- (p5v8);
			\draw[line width=0.42pt] (p5v2) -- (p5v14);
			\draw[line width=0.42pt] (p5v3) -- (p5v5);
			\draw[line width=0.42pt] (p5v3) -- (p5v9);
			\draw[line width=0.42pt] (p5v3) -- (p5v15);
			\draw[line width=0.42pt] (p5v4) -- (p5v5);
			\draw[line width=0.42pt] (p5v4) -- (p5v10);
			\draw[line width=0.42pt] (p5v4) -- (p5v16);
			\draw[line width=0.42pt] (p5v5) -- (p5v11);
			\draw[line width=0.42pt] (p5v5) -- (p5v17);
			\draw[line width=0.42pt] (p5v6) -- (p5v7);
			\draw[line width=0.42pt] (p5v6) -- (p5v8);
			\draw[line width=0.42pt] (p5v6) -- (p5v10);
			\draw[line width=0.42pt] (p5v6) -- (p5v12);
			\draw[line width=0.42pt] (p5v7) -- (p5v9);
			\draw[line width=0.42pt] (p5v7) -- (p5v11);
			\draw[line width=0.42pt] (p5v7) -- (p5v13);
			\draw[line width=0.42pt] (p5v8) -- (p5v9);
			\draw[line width=0.42pt] (p5v8) -- (p5v10);
			\draw[line width=0.42pt] (p5v8) -- (p5v14);
			\draw[line width=0.42pt] (p5v9) -- (p5v11);
			\draw[line width=0.42pt] (p5v9) -- (p5v15);
			\draw[line width=0.42pt] (p5v10) -- (p5v11);
			\draw[line width=0.42pt] (p5v10) -- (p5v16);
			\draw[line width=0.42pt] (p5v11) -- (p5v17);
			\draw[line width=0.42pt] (p5v12) -- (p5v13);
			\draw[line width=0.42pt] (p5v12) -- (p5v14);
			\draw[line width=0.42pt] (p5v12) -- (p5v16);
			\draw[line width=0.42pt] (p5v13) -- (p5v15);
			\draw[line width=0.42pt] (p5v13) -- (p5v17);
			\draw[line width=0.42pt] (p5v14) -- (p5v15);
			\draw[line width=0.42pt] (p5v14) -- (p5v16);
			\draw[line width=0.42pt] (p5v15) -- (p5v17);
			\draw[line width=0.42pt] (p5v16) -- (p5v17);
			\fill (p5v0) circle (1.10pt);
			\fill (p5v1) circle (1.10pt);
			\fill (p5v2) circle (1.10pt);
			\fill (p5v3) circle (1.10pt);
			\fill (p5v4) circle (1.10pt);
			\fill (p5v5) circle (1.10pt);
			\fill (p5v6) circle (1.10pt);
			\fill (p5v7) circle (1.10pt);
			\fill (p5v8) circle (1.10pt);
			\fill (p5v9) circle (1.10pt);
			\fill (p5v10) circle (1.10pt);
			\fill (p5v11) circle (1.10pt);
			\fill (p5v12) circle (1.10pt);
			\fill (p5v13) circle (1.10pt);
			\fill (p5v14) circle (1.10pt);
			\fill (p5v15) circle (1.10pt);
			\fill (p5v16) circle (1.10pt);
			\fill (p5v17) circle (1.10pt);
			\node[font=\small] at (11.950,-0.35) {(c) $p(5)=18$};
		\end{tikzpicture}
		\caption{Regular unit-distance sets with $6$, $9$, and $18$ points, realizing $p(3)=6$, $p(4)=9$, and $p(5)=18$, respectively. The examples are obtained as generic Minkowski sums of unit triangles and, for odd degree, a unit segment. Only unit-distance pairs are drawn.}
		\label{fig:small-regular}
	\end{figure}
	
	The upper bound follows from $p(n)\le p(r)p(n-r)$, obtained by taking a Minkowski sum after a generic relative rotation, together with $p(1)=2$ and $p(4)\le p(2)^2=9$. For finite point sets, all but finitely many relative rotations make the sum map injective and create no new unit distances: for fixed nonzero vectors $u,v$, the equation $|u+\rho v|=1$ has only finitely many solutions in the rotation $\rho$. We prove that no $5$-regular set has at most $16$ points. Since a $5$-regular graph has an even number of vertices, this gives the lower bound $p(5)\ge18$.
	
	\pagebreak[3]
	
	While examining an online PDF copy of Harborth's paper~\cite{H}, we found that the numbering of the subcases in Case 6 passes directly from 6.6 to 6.8, and we could not locate an explicit treatment of one configuration (Remark~\ref{rem:missing-case}). Attempting to exclude this configuration led to a reorganization of the argument around missing vector-sum positions. The alternative proof emphasizes explicit arguments and common local counting rules rather than brevity.
	
	A convex-hull argument and incidence counting reduce the lower-bound problem to $14$ or $16$ points; the former is excluded directly (Section~\ref{sec:reduction}). For $16$ points, triangle counting forces a point in at least three unit triangles. Its neighbours contain either four consecutive hexagon directions or a three-vertex path and a disjoint edge. The first configuration is excluded by circle intersections and a six-point classification (Section~\ref{sec:hexagon}); the second is reduced to local counts on seven candidate sum positions, six of which form a $2\times3$ ladder (Section~\ref{sec:path}).

	\section{Preliminaries}
	
	We identify the plane with $\C$ and put $\w=e^{i\pi/3}$. For $x,y\in\C$ we write $x\adj y$ if $|x-y|=1$. For $v\in S$, let $t(v)$ be the number of unit triangles with vertex $v$, equivalently the number of unit distances among the five neighbours of $v$.
	
	We repeatedly use the following elementary facts.
	\begin{itemize}
		\item[(P1)] Two distinct points $x,y$ have at most two common points at unit distance. If $p$ is one of them, then $x+y-p$ is the other; the two coincide if and only if $|x-y|=2$.
		\item[(P2)] If at most five points lie on a unit circle with centre $p$, the graph of unit distances among them is a disjoint union of paths; each path consists of consecutive vertices of a regular hexagon centred at $p$.
		\item[(P3)] Four points span at most five unit distances. Five occur only for two unit triangles with a common edge; four occur only for a rhombus or for a unit triangle with a pendant point.
	\end{itemize}
	
	For later use write a lattice point $p+q\w$ as $(p,q)$ and define
	\begin{align*}
		T_1 & =\{(0,0),(1,0),(0,1),(-1,1),(-1,0),(0,-1)\}, \\
		T_2 & =\{(-1,0),(0,0),(1,0),(-1,1),(0,1),(-1,2)\}, \\
		T_3 & =\{(-1,0),(0,0),(1,0),(-1,1),(0,1),(1,1)\}.
	\end{align*}
	A \emph{prism} is a set $\Delta\cup(\Delta+v)$, where $\Delta$ is a unit triangle and $|v|=1$.
	
	\begin{fact}[Six-point classification]\label{fact:six}
		A set of six distinct points with exactly nine unit distances is congruent to $T_1$, $T_2$ or $T_3$, or it is a prism.
	\end{fact}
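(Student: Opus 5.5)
We classify six-point unit-distance graphs with exactly nine edges by their degree sequence. The ingredients are the local facts (P1)--(P3) and the fact that $K_4$ and $K_{2,3}$ are not unit-distance graphs. The first follows from (P3). The second follows from (P1): two points have at most two common unit neighbours.

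\emph{Constraints.} The degree sum is $18$. A point of degree $5$ would have its five neighbours on a unit circle. By (P2) they span at most four unit distances, leaving at most $4$ edges among them. Together with the $5$ edges at the centre this gives $9$ edges only if the five neighbours form a path of five consecutive hexagon vertices. That configuration is $T_1$ up to congruence: the centre $(0,0)$ together with the five hexagon points listed in $T_1$. So assume all degrees are at most $4$. The possible degree sequences are then $(3,3,3,3,3,3)$, $(4,3,3,3,3,2)$, $(4,4,3,3,2,2)$, $(4,4,4,2,2,2)$, $(4,4,3,3,3,1)$, $(4,4,4,3,2,1)$ and $(4,4,4,4,1,1)$.

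\emph{Degree-$4$ vertices.} For a vertex $v$ of degree $4$, its four neighbours lie on the unit circle about $v$. By (P2) they carry at most $3$ edges, forming disjoint hexagon paths. The sixth point $w$ is adjacent to at most two of them by (P1), since the common neighbour $v$ is already one of the two allowed. So $e(N(v))+\deg w \le 3+2$, giving at most $4+3+2=9$ edges in total. Equality forces $N(v)$ to be a path of four consecutive hexagon vertices and $w$ to be adjacent to exactly two of them. Place $v=0$ and the path as $1,\w,\w^2,\w^3$. The point $w\neq v$ is then determined by (P1) as the second common neighbour $x+y$ of two path points $x,y$. We enumerate the choices $\{x,y\}$ with $x+y$ nonzero and not already a neighbour. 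For each choice we check that the degree of $w$ is $2$; a third adjacency is impossible by (P1), since the two common neighbours of $x,z$ are $v$ and $x+z$. The admissible pairs are $\{1,\w\}$, $\{1,\w^2\}$, $\{\w,\w^2\}$ and the symmetric ones; note $\{1,\w^3\}$ is excluded because $1+\w^3=0=v$. These choices yield $T_2$, $T_3$ and their reflections. This handles every sequence containing a $4$, so any configuration with a degree-$4$ vertex is $T_2$ or $T_3$. The case needs some care, because $N(v)$ could in principle have fewer than $3$ edges. In that case $\deg w\le 2$ already gives fewer than $9$ edges, so the case is excluded.

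\emph{Cubic case.} It remains to treat cubic graphs on six vertices. There are exactly two: $K_{3,3}$ and the prism $C_3\times K_2$. $K_{3,3}$ contains $K_{2,3}$, so it is excluded by (P1). For the prism graph, fix a triangle $\Delta=\{a,b,c\}$ and its matched triangle $\{a',b',c'\}$. Each rung $aa'$, $bb'$, $cc'$ has length $1$. The hard step is to show that the three rung vectors coincide, so that the point set equals $\Delta\cup(\Delta+v)$ rather than some twisted realization. First try a parametrization: write $a'=a+u$, $b'=b+u'$, $c'=c+u''$ with unit vectors $u,u',u''$. Then impose the conditions that $a'b'c'$ is a unit triangle with the same orientation as $\Delta$, or else with opposite orientation. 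In the same-orientation case the differences $u'-u$ and $u''-u$ are related by rotation by $\w$; combined with $|u|=|u'|=|u''|=1$, this forces $u=u'=u''$. I expect this to be the main obstacle. The danger is a rotated triangle of the form $a'=\rho a+t$: it must be checked that no nontrivial rotation keeps all three rungs of length $1$. The plan is to handle it by writing $|(\rho-1)a+t|=1$ for the three vertices $a$ as three points on a circle, then arguing that these three points, which form an equilateral triangle of side $|\rho-1|$, must all lie on the unit circle. This leaves finitely many $\rho$ to check, including the opposite-orientation case. If the direct computation gets messy, the fallback is to place $\Delta$ at $0,1,\w$ and solve for $a'$ on the unit circle explicitly, using (P1) at each step.
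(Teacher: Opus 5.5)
\textbf{Gap in the prism case.} Your treatment of vertices of degree $5$ and $4$ is essentially sound. The prism case, however, has a real gap, and it is exactly the place where the realization, not just the graph, must be pinned down. You assert that $u''-u=\w^{\pm1}(u'-u)$ together with $|u|=|u'|=|u''|=1$ forces $u=u'=u''$. This is false: every suitably labelled equilateral triangle inscribed in the unit circle satisfies the relation, e.g.\ $u=1$, $u'=\w^2$, $u''=\w^4$. With $\Delta=\{a,b,c\}=\{0,1,\w\}$ this choice gives $a'=1$, $b'=\w$, $c'=0$. That meets every condition you impose: three unit rungs, and a unit triangle $a'b'c'$ with the same orientation as $\Delta$. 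Equivalently, rotation by $\pm2\pi/3$ about the centroid of $\Delta$ is a nontrivial rotation keeping all three rungs of length $1$, so the statement you plan to check is not true. What excludes these solutions is that the six points are distinct (here $\{a',b',c'\}=\{b,c,a\}$), and distinctness never enters your prism argument. The opposite-orientation case is only announced. There the rung vectors $a'-a$ are collinear rather than forming an equilateral triangle, so your circle argument does not apply to it as described.

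\textbf{The missing step.} It is a one-line use of (P1), which is how the paper does it. In the $4$-cycle $a\,b\,b'\,a'$, the points $a$ and $b'$ are two distinct common unit neighbours of the distinct points $b$ and $a'$, so (P1) gives $b'=b+a'-a$; likewise $c'=c+a'-a$. Hence all rungs equal $v=a'-a$ and the set is $\Delta\cup(\Delta+v)$, with orientation and distinctness handled automatically.

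\textbf{Comparison with the paper, and two small fixes.} With that repair your proof is complete, and it takes a genuinely different, atlas-free route. The paper lists the $21$ graphs with six vertices and nine edges from Harary's atlas, keeps the five without $K_4$ or $K_{2,3}$, and must exclude the $(4,3,3,3,3,2)$ graph by a separate geometric argument. Your count $9=4+e(N(v))+\deg w\le 4+3+2$ disposes of every sequence containing a $4$ at once, so the sequences with a zero entry that your list omits are harmless. Two small fixes are needed there. First, the bound $\deg w\le2$ is (P1) applied to the pair $v,w$, since each neighbour of $w$ in $N(v)$ is a common unit neighbour of $v$ and $w$. Second, $\{1,\w^2\}$ is not admissible, because $1+\w^2=\w\in N(v)$ (just as $\w+\w^3=\w^2$). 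So only the three consecutive pairs survive: the middle one gives $T_2$, and the outer ones give $T_3$ and its mirror image.
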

	
	\begin{proof}
		A unit-distance graph contains no $K_{2,3}$, by (P1). It contains no $K_4$: for two adjacent vertices the two common unit-distance points are the two vertices of the equilateral triangles on their edge, and these two points are at distance $\sqrt3$, not $1$. Inspection of the $21$ graphs with six vertices and nine edges in Harary's atlas \cite{Ha} leaves exactly five which contain neither $K_4$ nor $K_{2,3}$. Their degree sequences are $(5,3,3,3,2,2)$, $(4,4,4,2,2,2)$, $(4,4,3,3,2,2)$, $(3,3,3,3,3,3)$, and $(4,3,3,3,3,2)$. The first four are the graphs of $T_1,T_2,T_3$ and the prism, respectively (Figure~\ref{fig:six-point}). The fifth consists of three successive unit triangles $vab,vbc,vcd$, together with a sixth vertex adjacent to $a$ and $d$. Since the points are distinct, the four points $a,b,c,d$ are consecutive vertices of the unit hexagon about $v$, so $|a-d|=2$. Their two unit circles are tangent at $v$, forcing the sixth vertex to coincide with $v$, a contradiction.
		
		For $T_1,T_2,T_3$, the graph is obtained by successively attaching unit triangles along already fixed edges; with six distinct points the incidence pattern forces each attachment, so the realization is unique up to congruence. In the prism graph, (P1) makes the three quadrilaterals rhombi, so every realization has the form $\Delta\cup(\Delta+v)$ with $\Delta$ a unit triangle and $|v|=1$. Every prism vertex has degree $3$ inside the prism. Moreover any two distinct prism points have distance less than $2$: if $x,y$ are vertices of the same unit triangle and $|x-(y+v)|=2$, equality in the triangle inequality forces $x-y=-v$, hence $x+v=y$, so two of the six prism points coincide.
	\end{proof}
	
	\begin{figure}[htbp]
		\centering
		\begin{tikzpicture}[x=1cm,y=1cm]
			\begin{scope}[shift={(0.00,0)}]
				\draw[line width=0.55pt] (1.2300,1.2000) -- (2.2100,1.2000);
				\draw[line width=0.55pt] (1.2300,1.2000) -- (1.7200,2.0487);
				\draw[line width=0.55pt] (1.2300,1.2000) -- (0.7400,2.0487);
				\draw[line width=0.55pt] (1.2300,1.2000) -- (0.2500,1.2000);
				\draw[line width=0.55pt] (1.2300,1.2000) -- (0.7400,0.3513);
				\draw[line width=0.55pt] (2.2100,1.2000) -- (1.7200,2.0487);
				\draw[line width=0.55pt] (1.7200,2.0487) -- (0.7400,2.0487);
				\draw[line width=0.55pt] (0.7400,2.0487) -- (0.2500,1.2000);
				\draw[line width=0.55pt] (0.2500,1.2000) -- (0.7400,0.3513);
				\fill (1.2300,1.2000) circle (1.6pt);
				\fill (2.2100,1.2000) circle (1.6pt);
				\fill (1.7200,2.0487) circle (1.6pt);
				\fill (0.7400,2.0487) circle (1.6pt);
				\fill (0.2500,1.2000) circle (1.6pt);
				\fill (0.7400,0.3513) circle (1.6pt);
				\node[font=\small] at (1.23,-0.08) {$T_1$};
			\end{scope}
			\begin{scope}[shift={(3.13,0)}]
				\draw[line width=0.55pt] (0.2500,0.3513) -- (1.2300,0.3513);
				\draw[line width=0.55pt] (0.2500,0.3513) -- (0.7400,1.2000);
				\draw[line width=0.55pt] (1.2300,0.3513) -- (2.2100,0.3513);
				\draw[line width=0.55pt] (1.2300,0.3513) -- (0.7400,1.2000);
				\draw[line width=0.55pt] (1.2300,0.3513) -- (1.7200,1.2000);
				\draw[line width=0.55pt] (2.2100,0.3513) -- (1.7200,1.2000);
				\draw[line width=0.55pt] (0.7400,1.2000) -- (1.7200,1.2000);
				\draw[line width=0.55pt] (0.7400,1.2000) -- (1.2300,2.0487);
				\draw[line width=0.55pt] (1.7200,1.2000) -- (1.2300,2.0487);
				\fill (0.2500,0.3513) circle (1.6pt);
				\fill (1.2300,0.3513) circle (1.6pt);
				\fill (2.2100,0.3513) circle (1.6pt);
				\fill (0.7400,1.2000) circle (1.6pt);
				\fill (1.7200,1.2000) circle (1.6pt);
				\fill (1.2300,2.0487) circle (1.6pt);
				\node[font=\small] at (1.23,-0.08) {$T_2$};
			\end{scope}
			\begin{scope}[shift={(6.26,0)}]
				\draw[line width=0.55pt] (0.0050,0.7756) -- (0.9850,0.7756);
				\draw[line width=0.55pt] (0.0050,0.7756) -- (0.4950,1.6244);
				\draw[line width=0.55pt] (0.9850,0.7756) -- (1.9650,0.7756);
				\draw[line width=0.55pt] (0.9850,0.7756) -- (0.4950,1.6244);
				\draw[line width=0.55pt] (0.9850,0.7756) -- (1.4750,1.6244);
				\draw[line width=0.55pt] (1.9650,0.7756) -- (1.4750,1.6244);
				\draw[line width=0.55pt] (1.9650,0.7756) -- (2.4550,1.6244);
				\draw[line width=0.55pt] (0.4950,1.6244) -- (1.4750,1.6244);
				\draw[line width=0.55pt] (1.4750,1.6244) -- (2.4550,1.6244);
				\fill (0.0050,0.7756) circle (1.6pt);
				\fill (0.9850,0.7756) circle (1.6pt);
				\fill (1.9650,0.7756) circle (1.6pt);
				\fill (0.4950,1.6244) circle (1.6pt);
				\fill (1.4750,1.6244) circle (1.6pt);
				\fill (2.4550,1.6244) circle (1.6pt);
				\node[font=\small] at (1.23,-0.08) {$T_3$};
			\end{scope}
			\begin{scope}[shift={(9.39,0)},scale=1.12]
				\draw[line width=0.55pt] (0.2797,0.9437) -- (1.2597,0.9437);
				\draw[line width=0.55pt] (0.2797,0.9437) -- (0.7697,1.7924);
				\draw[line width=0.55pt] (0.2797,0.9437) -- (1.2003,0.6076);
				\draw[line width=0.55pt] (1.2597,0.9437) -- (0.7697,1.7924);
				\draw[line width=0.55pt] (1.2597,0.9437) -- (2.1803,0.6076);
				\draw[line width=0.55pt] (0.7697,1.7924) -- (1.6903,1.4563);
				\draw[line width=0.55pt] (1.2003,0.6076) -- (2.1803,0.6076);
				\draw[line width=0.55pt] (1.2003,0.6076) -- (1.6903,1.4563);
				\draw[line width=0.55pt] (2.1803,0.6076) -- (1.6903,1.4563);
				\fill (0.2797,0.9437) circle (1.6pt);
				\fill (1.2597,0.9437) circle (1.6pt);
				\fill (0.7697,1.7924) circle (1.6pt);
				\fill (1.2003,0.6076) circle (1.6pt);
				\fill (2.1803,0.6076) circle (1.6pt);
				\fill (1.6903,1.4563) circle (1.6pt);
				\node[font=\small] at (1.23,-0.08) {prism};
			\end{scope}
		\end{tikzpicture}
		\caption{The four types of six-point sets with exactly nine unit distances. Up to congruence, the prism $\Delta\cup(\Delta+v)$ depends on the direction of $v$; one member of this family is shown. Only unit-distance pairs are drawn.}
		\label{fig:six-point}
	\end{figure}
	
	\subsection{Local notation and rhombus rules}
	
	Let $N=|S|$. Fix $P\in S$, translate so that $P=0$, and let $Z=\{z_1,\ldots,z_5\}$ be its five neighbours. Put $e=t(0)$ and $O=S\setminus(\{0\}\cup Z)$. For $i\ne j$ write $s_{ij}=z_i+z_j$. If $s_{ij}=s_{kl}\ne0$, then $\{i,j\}=\{k,l\}$: the two unit vectors in either pair are precisely the intersection points of the unit circles centred at $0$ and at the common sum. Thus distinct index pairs give distinct sum points in $O$.
	
	A point of $O$ has at most two neighbours in $Z$. If it has two, say $z_i,z_j$, then it equals $s_{ij}$ by (P1). We call such a point a \emph{sum point} or a point of type A. Points with exactly one or no neighbour in $Z$ are of type B or C. Let their numbers be $a,b,c$. In the counting arguments below, a point outside a specified family of sum points is said to \emph{supply} a neighbour of a sum point when the two are at unit distance; this is only shorthand for a unit-distance incidence. Counting the unit distances between $Z$ and $O$ gives
	\begin{equation}\label{eq:abc-general}
		2a+b=20-2e,\qquad a+b+c=N-6,
	\end{equation}
	and therefore
	\begin{equation}\label{eq:ac-general}
		a-c=26-N-2e.
	\end{equation}
	
	\begin{lemma}[Rhombus rules]\label{lem:rules}
		Let $i,j,k,r$ denote distinct indices.
		\begin{itemize}
			\item[(i)] $|s_{ki}-s_{kj}|=|z_i-z_j|$.
			\item[(ii)] If $y\ne z_k$, $y\adj s_{ki}$ and $y\adj s_{kj}$, then $y=z_k+z_i+z_j$.
			\item[(iii)] Let $s_{ij}\in O$ and $z_r\adj z_i$. If $y\adj z_r$ and $y\adj s_{ij}$, then $y\in\{z_i,s_{rj}\}$.
			\item[(iv)] Let $s_{ij}\in O$. The common unit-distance points of $z_i$ and $s_{ij}$ are $z_i+z_j\w$ and $z_i+z_j\wb$.
			\item[(v)] Suppose $z_i+z_j=z_m$ with $m\ne k$. If $y\adj s_{ik}$ and $y\adj s_{jk}$, then $y\in\{z_k,s_{mk}\}$.
			\item[(vi)] Let $y$ be of type B, with unique neighbour $z_r$ in $Z$, and suppose $y\adj s_{ij}$. If $r\notin\{i,j\}$, then $z_r$ is adjacent to neither $z_i$ nor $z_j$.
		\end{itemize}
	\end{lemma}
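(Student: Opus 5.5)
Each of (i)--(vi) reduces to a short computation with unit vectors, (P1), and the uniqueness of sum points for distinct index pairs noted above. I will prove them in order, since later items reuse earlier ones.

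For (i), $s_{ki}-s_{kj}=z_i-z_j$, so there is nothing to prove. For (ii), note that $y$ and $z_k$ are both common unit-distance points of $s_{ki}$ and $s_{kj}$. These two sum points are distinct because $i\ne j$. By (P1) the common points of two distinct points $x,x'$ are $p$ and $x+x'-p$. With $p=z_k$ this gives $y=s_{ki}+s_{kj}-z_k=z_k+z_i+z_j$.

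For (iii), observe that $z_i$ is a common neighbour of $z_r$ and $s_{ij}$: indeed $z_r\adj z_i$, and $|s_{ij}-z_i|=|z_j|=1$. Also $z_r\ne s_{ij}$, since $s_{ij}\in O$. So by (P1) the only other common neighbour is $z_r+s_{ij}-z_i=z_r+z_j=s_{rj}$, and $y\in\{z_i,s_{rj}\}$. For (iv), put $x=z_i$ and $x'=s_{ij}=z_i+z_j$, so that $x'-x=z_j$ is a unit vector. The two points at unit distance from both lie at $x+z_j\w$ and $x+z_j\wb$, as one checks from $|z_j\w|=1$ and $|z_j\w-z_j|=|\w-1|=1$. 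Since $|x-x'|=1\ne2$, these two points are distinct, and by (P1) they are the only ones.

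For (v), $z_k$ is a common neighbour of $s_{ik}$ and $s_{jk}$, because $s_{ik}-z_k=z_i$ and $s_{jk}-z_k=z_j$ are unit vectors. These two sum points are distinct since $i\ne j$. By (P1) the other common point is $s_{ik}+s_{jk}-z_k=z_i+z_j+z_k=z_m+z_k=s_{mk}$. The hypothesis $m\ne k$ ensures $s_{mk}$ is a genuine sum (not $2z_k$), although (P1) only needs the formula. This is really (ii) specialised.

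For (vi), I argue by contradiction. Suppose $z_r\adj z_i$; the case $z_r\adj z_j$ is symmetric. Then $y$ is adjacent to both $z_r$ and $s_{ij}$, so (iii) gives $y\in\{z_i,s_{rj}\}$. But $y\in O$, so $y\ne z_i$, and hence $y=s_{rj}$. Since $r\ne j$, this point is adjacent to both $z_r$ and $z_j$, so $y$ would have two neighbours in $Z$. That contradicts $y$ being of type B. Here I need $s_{ij}\in O$ in order to invoke (iii). This holds because $y$ is of type B, so it lies in $O$, and a point of $Z$ adjacent to $y$ must be $z_r$. I also need $s_{ij}\ne z_r$; otherwise $y$ would have $z_r$ and possibly more neighbours among $Z\cup\{0\}$ to account for. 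If $s_{ij}\in Z$, then $s_{ij}=z_m$ for some $m$, and $y\adj z_m$ forces $m=r$; in that case $z_r=z_i+z_j$ is adjacent to both $z_i$ and $z_j$. So the statement implicitly assumes $s_{ij}\in O$, and I would read it that way.

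The main obstacle is exactly this bookkeeping of hypotheses: checking distinctness of the points fed into (P1) and that the points lie in $O$ rather than $Z$. The computations themselves are immediate.
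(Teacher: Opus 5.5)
Your proof follows the paper's item by item. In (ii), (iii) and (v) you apply (P1) with the known common point $z_k$ or $z_i$; (iv) is the direct circle intersection; and (vi) is reduced to (iii) exactly as in the paper.

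Your caveat about (vi) is correct and worth keeping. The paper's proof also invokes (iii) without checking $s_{ij}\in O$. If $s_{ij}=z_r$, as with $s_{13}=z_2$ when $z_1\adj z_2\adj z_3$, the conclusion of (vi) really does fail. So delete your sentence claiming that $s_{ij}\in O$ follows from $y$ being of type B, and state the hypothesis $s_{ij}\notin Z$ explicitly. That is how (vi) is actually used in Lemma~\ref{lem:four-supply}.
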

	
	\begin{proof}
		Part (i) is immediate. For (ii), $z_k$ is one common unit-distance point of $s_{ki}$ and $s_{kj}$, so (P1) gives the other as $s_{ki}+s_{kj}-z_k=z_k+z_i+z_j$. Part (iii) follows in the same way from the two common points $z_i$ and $s_{rj}$. Part (iv) is the elementary intersection of the two unit circles. Part (v) follows from (ii), since $z_k+z_i+z_j=s_{mk}$. For (vi), if for instance $z_r\adj z_i$, then (iii) gives $y\in\{z_i,s_{rj}\}$. The first point belongs to $Z$ and the second, if it belongs to $O$, is a sum point; neither can be of type B. The same argument applies to $z_j$.
	\end{proof}
	
	\begin{lemma}\label{lem:B}
		There is at most one pair $\{i,j\}$ such that
		\[
		z_i+z_j=z_r+z_s+z_t,
		\]
		where $\{r,s,t\}$ is the complementary three-element index set.
	\end{lemma}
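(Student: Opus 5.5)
The plan is to rewrite the condition of Lemma~\ref{lem:B} in terms of the total vector sum $\sigma=z_1+\cdots+z_5$ and then use the fact, noted in the local notation above, that distinct index pairs give distinct nonzero sums $s_{ij}$. For a pair $\{i,j\}$ with complement $\{r,s,t\}$ we have $z_r+z_s+z_t=\sigma-s_{ij}$. So the condition $z_i+z_j=z_r+z_s+z_t$ is equivalent to
\[
s_{ij}=\tfrac12\sigma .
\]
Every pair satisfying the condition therefore has the same sum vector, namely $\sigma/2$, which depends only on $Z$.

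Now suppose two distinct pairs $\{i,j\}\neq\{k,l\}$ both satisfy the condition. Then $s_{ij}=s_{kl}=\sigma/2$. If this common value is nonzero, the observation preceding Lemma~\ref{lem:rules} applies. The two unit vectors in each pair are exactly the intersection points of the unit circles centred at $0$ and at $\sigma/2$, so $\{i,j\}=\{k,l\}$, a contradiction.

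It remains to treat $\sigma=0$, so that $z_i=-z_j$ and $z_k=-z_l$. There are two subcases.
\begin{itemize}
\item[(a)] The pairs share an index, say $j=l$ and $i\ne k$. Then $z_i=-z_j=z_k$, contradicting the distinctness of the neighbours.
\item[(b)] The pairs are disjoint. Let $m$ be the fifth index. Then $z_m=\sigma-s_{ij}-s_{kl}=0$, contradicting $|z_m|=1$.
\end{itemize}
Hence at most one pair satisfies the condition.

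The only point needing care is the degenerate case $\sigma=0$, where the uniqueness of nonzero sum points is unavailable. As shown, it is settled directly by distinctness and by the fact that all $z_i$ are unit vectors. I expect no real obstacle beyond this.
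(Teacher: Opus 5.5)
Your proof is correct and follows the paper's argument essentially step for step. You rewrite the condition as $z_i+z_j=\tfrac12\sum_r z_r$, use uniqueness of the pair with a prescribed nonzero sum via the circle-intersection observation (P1), and dispose of the zero-sum case by the shared-index/disjoint-pairs dichotomy.
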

	
	\begin{proof}
		Put $\Sigma=z_1+\cdots+z_5$. The identity is equivalent to $2(z_i+z_j)=\Sigma$. If it holds for two different pairs, then these pairs have the same sum $\Sigma/2$. If $\Sigma\ne0$, the unordered pair of unit vectors with a prescribed nonzero sum is unique by (P1). If $\Sigma=0$, each such pair is antipodal; two different pairs either share an index, forcing two $z_i$ to coincide, or are disjoint, forcing the fifth vector to be zero. Both are impossible.
	\end{proof}
	
	\begin{lemma}[Five-sum lemma]\label{lem:five-sums}
		A point $y\notin Z$ can be adjacent to at most four distinct sum points $s_{ij}$.
	\end{lemma}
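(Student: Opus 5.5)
The plan is to encode the sum points adjacent to $y$ as a graph and use Lemma~\ref{lem:rules}(ii) to pin $y$ down as a triple sum. Let $G$ be the graph on the index set $\{1,\dots,5\}$ whose edges are the pairs $\{i,j\}$ with $y\adj s_{ij}$. Since distinct index pairs give distinct sum points, the statement says exactly that $G$ has at most four edges. We argue by contradiction and assume $G$ has at least five edges.

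\emph{Step 1.} Suppose two edges of $G$ share a vertex, say $\{k,i\}$ and $\{k,j\}$. Then $y\adj s_{ki}$ and $y\adj s_{kj}$. Moreover $y\ne z_k$, because $y\notin Z$. So Lemma~\ref{lem:rules}(ii) gives
\[
y=z_i+z_j+z_k=\sum_{t\in T}z_t,\qquad T=\{i,j,k\}.
\]
Thus every pair of adjacent edges (every path with two edges) in $G$ forces $y$ to be the sum over the three indices it spans.

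\emph{Step 2: the triple is unique.} Let $T\ne T'$ be two three-element subsets of $\{1,\dots,5\}$ with $y=\sum_{T}z_t=\sum_{T'}z_t$. If $|T\cap T'|=2$, cancelling the common terms gives $z_a=z_b$ for distinct indices $a,b$, which is impossible. Two triples in a five-element set cannot be disjoint, so otherwise $T\cap T'=\{i\}$. Then the remaining four indices split as $\{j,k\}\cup\{l,m\}$ with $z_j+z_k=z_l+z_m$.
\begin{itemize}
\item[(a)] If this common sum is nonzero, (P1) says the unordered pair of unit vectors with that sum is unique. This forces $\{j,k\}=\{l,m\}$, which is false.
\item[(b)] If the sum is zero, then $y=z_i+(z_j+z_k)=z_i\in Z$, contradicting $y\notin Z$.
\end{itemize}
Hence all paths of length two in $G$ span one and the same triple $T$.

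\emph{Step 3: counting edges.} Every edge of $G$ that meets another edge lies in a two-edge path, so by Step~2 it has both endpoints in $T$. There are at most $\binom{3}{2}=3$ such edges. Every remaining edge is disjoint from all other edges. It cannot touch $T$: an edge with an endpoint in $T$ that meets no other edge would leave no edges inside $T$, and then the only other edges available are edges among the two indices outside $T$. So the remaining edges use only the two indices outside $T$, which allows at most one edge.
\begin{itemize}
\item[(a)] If some two-edge path exists, $G$ has at most $3+1=4$ edges.
\item[(b)] If no two-edge path exists, $G$ is a matching on five vertices and has at most $2$ edges.
\end{itemize}
Either way $G$ has at most four edges, contradicting the assumption.

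The only delicate point is Step~2(b), the degenerate case where the complementary pair sums to zero. This is where the hypothesis $y\notin Z$ is genuinely used, just as in the zero-sum case of Lemma~\ref{lem:B}. The rest is bookkeeping on a graph with five vertices, and I expect no obstacle there.
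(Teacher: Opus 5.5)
Your proof is correct, but it takes a different route from the paper.

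\emph{The paper's route.} It first bounds degrees in the index graph. For fixed $i$, every $s_{ij}$ adjacent to $y$ is a common unit-distance point of $z_i$ and $y\ne z_i$, so by (P1) each index has degree at most $2$. Five edges on five vertices then force a $5$-cycle $12,23,34,45,15$. Lemma~\ref{lem:rules}(ii) at the vertices $2$ and $1$ gives $z_1+z_2+z_3=y=z_1+z_2+z_5$. So the paper only ever needs your cancellation case $|T\cap T'|=2$.

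\emph{Your route.} You never use the degree bound. Instead you show that all two-edge paths determine a single triple $T$. This costs the extra pair-uniqueness argument in Step~2, including the antipodal subcase and a second use of $y\notin Z$. In return you get a sharper structural conclusion: the index graph is either a matching, or it lies in the triangle on $T$ together with the complementary edge, with $y=\sum_{t\in T}z_t$. The paper's argument, by contrast, only excludes the $5$-cycle.

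\emph{Two small remarks.}
\begin{itemize}
\item[(1)] Your case $|T\cap T'|=1$ can be bypassed entirely. If $T\cap T'=\{i\}$, take an edge $\{i,j\}$ of the first path and an edge $\{i,l\}$ of the second. The path $j,i,l$ spans a triple that meets $T$ in two indices but differs from $T$, since $l\notin T$. The cancellation case then already gives a contradiction.
\item[(2)] In Step~3, the sentence explaining why isolated edges avoid $T$ is garbled. The clean reason is this: in case (a), the two edges of a path spanning $T$ already cover all three vertices of $T$, so any edge touching $T$ meets another edge.
\end{itemize}
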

	
	\begin{proof}
		Make a graph on the indices $1,\ldots,5$ and join $i$ to $j$ when $s_{ij}$ is a sum point adjacent to $y$. For fixed $i$, the corresponding points $s_{ij}$ are common unit-distance points of $z_i$ and $y$, so the index graph has maximum degree at most $2$ by (P1). If it had at least five edges, it would therefore be a $5$-cycle. Relabel so that $12,23,34,45,15$ are its edges. By Lemma~\ref{lem:rules}(ii), applied at indices $2$ and $1$,
		\[
		y=z_1+z_2+z_3=z_1+z_2+z_5,
		\]
		so $z_3=z_5$, a contradiction.
	\end{proof}
	
	\begin{lemma}[Eisenstein circle observation]\label{lem:eisenstein-circle}
		Let $L=\mathbb Z[\w]$, let $|\eta|=1$, and suppose that $\eta$ is not a sixth root of unity. If $u\in L$ and
		\[
		|u|\in\{1,\sqrt3,2,\sqrt7,3\},
		\]
		then $|\eta+u|\ne1$.
	\end{lemma}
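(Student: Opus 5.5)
The plan is to read the condition $|\eta+u|=1$ geometrically: together with $|\eta|=1$ it says that $\eta$ is a common unit-distance point of $0$ and $-u$. By (P1), there are at most two such points. I will show that for each admissible value of $|u|$ these points are either absent or are themselves sixth roots of unity, which contradicts the hypothesis on $\eta$.

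First, I dispose of the large norms. Suppose $|\eta+u|=1$. The triangle inequality then gives $|u|\le|\eta|+|\eta+u|=2$. So the cases $|u|=\sqrt7$ and $|u|=3$ are impossible outright, and only $|u|\in\{1,\sqrt3,2\}$ remains.

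For the remaining cases, I use the norm form $N(a+b\w)=a^2+ab+b^2$ on $L$ to list the elements of each norm. The norm-$1$ elements are the six units $\zeta\in\{\pm1,\pm\w,\pm\w^2\}$, i.e.\ the sixth roots of unity. The norm-$3$ elements are the six associates $\zeta(1+\w)$. The norm-$4$ elements are only $2\zeta$. For norm $4$, I check directly that $a^2+ab+b^2=4$ forces $\{a,b\}$ to be one of $\{2,0\},\{0,-2\},\{2,-2\}$ up to sign; alternatively, note that $2$ is inert in $L$. Since multiplying everything by a unit $\zeta$ preserves both the hypothesis and the conclusion, I may assume $u\in\{1,\,1+\w,\,2\}$.

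Now I compute the common unit-distance points of $0$ and $-u$ in each case.
\begin{itemize}
\item[($|u|=2$)] The two unit circles are tangent, so by (P1) the only common point is $-u/2=-1$, a sixth root of unity.
\item[($|u|=1$)] The common points are the apexes of the two unit triangles on the segment $[0,-1]$, namely $-\w$ and $-\wb$.
\item[($|u|=\sqrt3$, $u=1+\w$)] The common points are $-1$ and $-\w$. Indeed $-1+(1+\w)=\w$ and $-\w+(1+\w)=1$, both of modulus $1$, and (P1) says there are no others.
\end{itemize}
In every case $\eta$ would be a sixth root of unity, a contradiction.

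The only step needing real care is the claim that the elements of $L$ with the prescribed norms are exactly the listed associates. The norm-$4$ case is the one where this matters, since a primitive representation of $4$ would give a genuinely new tangent point. I expect this to be settled by a short bounded search: $a^2+ab+b^2=\tfrac34(a+b)^2+\tfrac14(a-b)^2$ bounds $|a|,|b|\le 2$. The rest is immediate from (P1).
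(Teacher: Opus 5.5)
Your proof is correct and is essentially the paper's argument. The triangle inequality removes $|u|\in\{\sqrt7,3\}$; for $|u|\in\{1,\sqrt3,2\}$, the description of the elements of $L$ of norm $1,3,4$ up to units shows that the common unit-distance points of $0$ and $-u$ are sixth roots of unity. The paper expresses this through the angle condition $\operatorname{Re}(\eta\bar u)=-|u|^2/2$ rather than by normalizing $u$ and invoking (P1). Your explicit check that the norm-$4$ elements are exactly $2\zeta$ with $\zeta$ a unit is a welcome addition, because the paper uses this tacitly when it concludes that $\eta=-u/2$ is a sixth root of unity.
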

	
	\begin{proof}
		If $|u|>2$, the assertion follows from the triangle inequality. Otherwise $|\eta+u|=1$ is equivalent to
		\[
		\operatorname{Re}(\eta\overline u)=-\frac{|u|^2}{2}.
		\]
		For $|u|=1$, the vector $u$ is a unit of $L$ and the angle from $u$ to $\eta$ is $\pm2\pi/3$. For $|u|=\sqrt3$, the direction of $u$ is $\pi/6$ modulo $\pi/3$, while the angle from $u$ to $\eta$ is $\pm5\pi/6$. For $|u|=2$, equality in the real-part estimate forces $\eta=-u/2$. In every case $\eta$ is a sixth root of unity, a contradiction.
	\end{proof}

	\section{Elementary reduction to sixteen points}\label{sec:reduction}
	
	\begin{proposition}\label{prop:reduction}
		If a $5$-regular set $S$ has at most $16$ points, then $|S|=16$.
	\end{proposition}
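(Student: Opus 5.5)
We need to show that a $5$-regular set $S$ with $N=|S|\le16$ points has exactly $16$ points. Since $5N$ is twice the number of unit distances, $N$ is even. Every point has five neighbours, so $N\ge6$.

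\emph{Lower bound $N\ge 14$ from the convex hull.} Take a vertex $P$ of the convex hull of $S$. All five neighbours of $P$ lie in a closed half-plane through $P$, so they lie on a closed half of the unit circle about $P$. At most one unit distance can occur between any two of them, for each pair of consecutive hexagon directions. Hence $t(P)\le4$, with equality only if the five neighbours are five consecutive vertices of a hexagon centred at $P$, spanning an angle of exactly $\pi$.

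Next I would use \eqref{eq:ac-general}, $a-c=26-N-2e$, together with $a\le\binom52=10$ and the constraint $2a+b=20-2e$, to bound $N$ from below:
\[
N-6=a+b+c\ge a+b=20-2e-a\ge 20-2e-\min(10,\,10-e).
\]
Here $a\le 10-e$, since $2a\le20-2e$. This gives $N\ge 16-e$ at a hull vertex, so $e\le4$ yields $N\ge12$. To get $N\ge14$ I would sharpen the case $e\ge3$ at a hull vertex:
\begin{itemize}
\item If $e=4$ (five consecutive hexagon points on a half circle), the sum points $s_{ij}$ coincide in part with the lattice points $z_k$. Indeed $z_i+z_{i+2}=z_{i+1}$ whenever the angle between $z_i$ and $z_{i+2}$ is $2\pi/3$. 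Only the non-coinciding sums lie in $O$, which lowers $a$ and forces more type B points.
\item If $e=3$, I would argue in the same way.
\end{itemize}
Combining these gives $N\ge 14$.

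\emph{Excluding $N=14$.} I would choose $P$ with maximal $t(P)$. Counting triangles, $\sum_v t(v)=3T$. The relation $a-c=12-2e$ with $c\ge0$ gives $a\ge 12-2e$, and $a\le 10-e$ gives $e\ge2$ everywhere. So every point lies in at least two unit triangles. Applied at a hull vertex, $e\le4$, and I would examine $e\in\{2,3,4\}$ there.

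For $e=2$ we need $a\ge8$ and $2a+b=16$, so $(a,b,c)$ is $(8,0,0)$. Each sum point then has at most two neighbours in $Z$, hence at least two neighbours among the other sum points. I would then check these adjacencies using the Rhombus rules (Lemma~\ref{lem:rules}(i),(ii)) and the Five-sum lemma (Lemma~\ref{lem:five-sums}). Lemma~\ref{lem:rules}(i) says $s_{ki}\adj s_{kj}$ iff $z_i\adj z_j$, which gives only a few such edges; the remaining neighbours have the form $z_k+z_i+z_j$ by (ii), and these must again be sum points. Lemma~\ref{lem:B} limits such coincidences to one. I expect this to give a count contradiction.

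For $e=3,4$ at the hull vertex, the neighbours are rigid consecutive hexagon points. Many sums then become lattice points, and Lemma~\ref{lem:eisenstein-circle} prevents non-lattice points from attaching to them. This should force too few points.

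The main obstacle is excluding $N=14$, and in particular the case $e=2$ with all eight outside points being sum points. There I would enumerate the possible edge patterns among $Z$ (two triangles: either a shared edge, or two disjoint edges plus a path) and verify each sum point's degree explicitly.
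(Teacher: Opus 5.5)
There is a genuine gap. Beyond the counting bound, the content of the proposition is the exclusion of $N=14$, and your proposal only announces it. For $e=3$ the plan also rests on a wrong picture of the configuration.

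The trouble starts with the hull-vertex geometry. Five consecutive hexagon vertices span an angle of $4\pi/3$, not $\pi$, so they never fit into a half-plane, and $e=4$ cannot occur at a hull vertex. Since $P$ is an extreme point, its neighbours even lie in a cone of angle strictly less than $\pi$. This also excludes four consecutive hexagon directions, which span exactly $\pi$. Hence $e\le3$, so $N\ge16-e\ge13$, and parity gives $N\ge14$ at once. Your sharpening is therefore unnecessary; your count $a\le4$ for $e=4$ would also give $N\ge14$, but that case is vacuous. More importantly, $e=3$ at a hull vertex forces $z_1\adj z_2\adj z_3$, $z_2=z_1+z_3$, together with a disjoint edge $z_4\adj z_5$ that does not lie on the hexagon through $z_1,z_2,z_3$. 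So your description of $e=3$ does not apply: the neighbours are not ``rigid consecutive hexagon points'', the relevant sums are not lattice points, and Lemma~\ref{lem:eisenstein-circle} is not what controls attachments. Choosing $P$ with maximal $t(P)$ also plays no role; all local analysis must be done at the hull vertex, where $e\in\{2,3\}$.

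What is actually needed for $N=14$ is the following.
\begin{itemize}
\item For $e=2$ one has $(a,b,c)=(8,0,0)$, and each sum point has exactly three (not ``at least two'') neighbours in $O$, all of them sum points.
\item If the two edges form a path $z_1\adj z_2\adj z_3$, only one of the nine sum positions may be absent. But $z_2$ already has the neighbours $0,z_1,z_3$ and can accept at most two of $s_{12},s_{23},s_{24},s_{25}$.
\item If the two edges are disjoint, the eight sum points span $12$ unit distances. Lemma~\ref{lem:rules}(i) allows at most six of these between pairs with a common index. Lemma~\ref{lem:rules}(ii) together with Lemma~\ref{lem:B} shows that a sum point has at most one neighbour with disjoint index pair, with at most one exception. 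Hence there are at most five such unit distances, giving at most $11<12$.
\item For $e=3$ one gets $(a,b,c)\in\{(7,0,1),(6,2,0)\}$. The first is excluded by Lemma~\ref{lem:five-sums}, since the type C point would need five sum neighbours.
\item The second alternative needs three further steps. First, the degree count $E=5+h\in\{5,6\}$, where $E$ is the number of unit distances among the six present sum points. Second, Lemma~\ref{lem:rules}(iii), showing that among disjoint index pairs only $s_{12}\adj s_{45}$ or $s_{23}\adj s_{45}$ can occur, and not both. Third, a check of all missing triples $M$ with $m_2\ge2$ and $m_4,m_5\ge1$, showing that $E$ is never $5$ or $6$.
\end{itemize}

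Your sketch names the right ingredients for the disjoint-edge subcase of $e=2$ but carries out none of the steps. For the path subcase of $e=2$ and for $e=3$ it contains none of them.
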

	
	\begin{proof}
		Choose a vertex $0$ of the convex hull of $S$. Its five neighbours lie in a cone of angle strictly less than $\pi$. Hence, by (P2), they determine at most three unit distances, so $e\le3$. The five neighbours require $20-2e$ incidences with the remaining $N-6$ points, each of which supplies at most two. Thus
		\[
		20-2e\le2(N-6),\qquad N\ge16-e\ge13.
		\]
		Since $5N$ is even, $N$ is even. Hence $N\in\{14,16\}$.
		
		Assume $N=14$. Then $|O|=8$ and
		\[
		a-c=12-2e.
		\]
		The preceding lower bound gives $e\ge2$.
		
		\emph{Case $e=2$.} If the two unit distances in $Z$ form a path $z_1\adj z_2\adj z_3$, then $s_{13}=z_2$. From \eqref{eq:abc-general} we get $(a,b,c)=(8,0,0)$. There are nine possible sum points outside $\{0\}\cup Z$, so exactly one would be absent. But $z_2$ already has the neighbours $0,z_1,z_3$ and can have only two neighbours in $O$; among $s_{12},s_{23},s_{24},s_{25}$ at least two must therefore be absent. Contradiction.
		
		It remains that the two unit distances are disjoint, say $z_1\adj z_2$ and $z_3\adj z_4$. Again $(a,b,c)=(8,0,0)$. The eight points of $O$ are sum points and each has degree $3$ inside $O$, so they span $12$ unit distances. Sum points with a common index can be adjacent only through one of the two edges of $Z$, by Lemma~\ref{lem:rules}(i); hence there are at most six such unit distances. There must therefore be at least six unit distances between sum points with disjoint index pairs.
		
		For a sum point $s_{ij}$ let $q(s_{ij})$ be the number of its neighbours whose index pair is disjoint from $\{i,j\}$. If $q(s_{ij})\ge2$, two of these neighbours share an index, and Lemma~\ref{lem:rules}(ii) gives
		\[
		z_i+z_j=\sum_{r\notin\{i,j\}}z_r.
		\]
		By Lemma~\ref{lem:B}, this can happen for at most one sum point. Hence the sum of all $q$-values is at most $3+7=10$, so there are at most five unit distances between disjoint index pairs. Contradiction.
		
		\emph{Case $e=3$.} Since all five directions lie in an angle strictly less than $\pi$, the three edges in $Z$ form a path of length two and a disjoint edge. Relabel so that
		\[
		z_1\adj z_2\adj z_3,\qquad z_2=z_1+z_3,\qquad z_4\adj z_5.
		\]
		A relation $s_{ij}=z_k$ is equivalent to $z_i\adj z_k\adj z_j$, and no two $z_i$ are antipodal because they lie in a cone of angle less than $\pi$. Hence the only sum $s_{ij}$ that coincides with a point of $Z$ is $s_{13}=z_2$, and no sum is $0$. Thus there are nine possible sum points in $O$. The count gives
		\[
		(a,b,c)=(7,0,1)\quad\hbox{or}\quad(6,2,0).
		\]
		In the first alternative, the unique type C point has all five neighbours among the seven sum points, contrary to Lemma~\ref{lem:five-sums}.
		
		Consider $(a,b,c)=(6,2,0)$. Exactly three of the nine possible sum points are absent. Let $M$ be their index pairs, and let $m_i$ count the members of $M$ containing $i$. Counting the missing sum neighbours at the five points $z_i$ gives
		\[
		m_2\ge2,\qquad m_4\ge1,\qquad m_5\ge1.
		\]
		Let $E$ be the number of unit distances among the six sum points and let $h$ be $1$ or $0$ according as the two type B points are adjacent or not. Degree counting in $O$ gives
		\[
		18=2E+F,\qquad 8=F+2h,
		\]
		and therefore
		\begin{equation}\label{eq:E14}
			E=5+h\in\{5,6\}.
		\end{equation}
		
		Among sum points with disjoint index pairs, all adjacencies are impossible by Lemma~\ref{lem:rules}(iii), except possibly
		\[
		s_{12}\adj s_{45},\qquad s_{23}\adj s_{45}.
		\]
		These two cannot both occur: a common unit-distance point of $s_{12}$ and $s_{23}$ is either $z_2$ or $2z_2$, whereas $s_{45}\in O$ and $|s_{45}|=\sqrt3$.
		
		Of the $\binom{9}{3}$ three-element subsets of eligible sum positions, exactly $20$ satisfy $m_2\ge2$ and $m_4,m_5\ge1$. Identifying subsets under the relabellings $1\leftrightarrow3$ and $4\leftrightarrow5$ gives the following seven possibilities. Their orbit sizes, in table order, are $4,1,2,4,4,4,1$, which sum to $20$. The middle column counts the forced unit distances between present sum points with a common index; the last column includes the at most one exceptional adjacency just described.
		\begin{center}
			\begin{tabular}{lcc}
				\toprule
				$M$            & common-index edges & upper bound for $E$ \\
				\midrule
				$\{12,14,25\}$ & 2                  & 3                   \\
				$\{12,23,45\}$ & 7                  & 7                   \\
				$\{12,24,25\}$ & 2                  & 3                   \\
				$\{12,24,35\}$ & 2                  & 3                   \\
				$\{12,24,45\}$ & 4                  & 4                   \\
				$\{14,24,25\}$ & 1                  & 2                   \\
				$\{24,25,45\}$ & 2                  & 2                   \\
				\bottomrule
			\end{tabular}
		\end{center}
		Thus $E$ is never $5$ or $6$, contradicting \eqref{eq:E14}. This excludes $N=14$, and therefore $N=16$.
	\end{proof}
	
	From now on
	\begin{equation}\label{eq:16}
		|S|=N=16.
	\end{equation}
	Equations \eqref{eq:abc-general} and \eqref{eq:ac-general} become
	\begin{equation}\label{eq:abc}
		a-c=10-2e,\qquad b=20-2e-2a.
	\end{equation}
	No point of $O$ is adjacent to $0$, and every sum point has exactly three neighbours in $O$.

	\section{Every point lies in at least two unit triangles}\label{sec:triangles}
	
	The following lemma replaces Cases 1 and 2 (2.1--2.5) of \cite{H}.
	
	\begin{lemma}\label{lem:two}
		$t(v)\ge 2$ for every $v\in S$.
	\end{lemma}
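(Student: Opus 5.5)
We fix $v$, translate so that $v=0$, and use the local notation with $e=t(0)$ and $N=16$. The plan is to assume $e\le1$ and derive a contradiction from \eqref{eq:abc}, using the same tools as in the case $N=14$ of Proposition~\ref{prop:reduction}. These tools are the bound on sum points, Lemma~\ref{lem:five-sums}, and the $q$-value count based on Lemma~\ref{lem:rules}(i),(ii) and Lemma~\ref{lem:B}.

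\emph{Case $e=0$.} A relation $s_{ij}=z_k$ would require $z_i\adj z_k\adj z_j$, which is impossible here. Hence the only sums not lying in $O$ are those equal to $0$, and $a\le10$. Since $a-c=10$, this forces $(a,b,c)=(10,0,0)$, with no antipodal pair among the $z_i$. The ten sum points each have three neighbours in $O$, so they span $15$ unit distances. By Lemma~\ref{lem:rules}(i), two sum points with a common index are adjacent only through an edge of $Z$, and there is no such edge. So all $15$ unit distances join disjoint index pairs. By Lemma~\ref{lem:rules}(ii) and Lemma~\ref{lem:B}, at most one sum point has $q\ge2$, and that point has $q\le3$. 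Hence $\sum q\le 3+9=12$, giving at most $6$ such distances. This is a contradiction.

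\emph{Case $e=1$.} Say $z_1\adj z_2$. Again no $s_{ij}$ lies in $Z$, and \eqref{eq:abc} gives $a-c=8$ and $2a+b=18$. This leaves $(a,b,c)=(9,0,1)$ or $(8,2,0)$. In the first alternative the type C point has all five neighbours among the sum points, contrary to Lemma~\ref{lem:five-sums}. In the second alternative, each type B point has four neighbours in $O$, since it is not adjacent to $0$. Let $E$ be the number of unit distances among the sum points, $F$ the number between sum points and type B points, and $h$ as before. Degree counting gives
\[
24=2E+F,\qquad 8=F+2h,
\]
so $E=8+h\ge8$.

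Common-index adjacencies can only be $s_{1k}\adj s_{2k}$ for $k=3,4,5$, which contributes at most $3$. Hence at least $5$ unit distances join disjoint index pairs. The $q$-count gives $\sum q\le 3+7=10$, hence at most $5$ such distances. So every estimate must be sharp:
\begin{itemize}
\item $h=0$ and $F=8$, so each type B point is adjacent to four sum points;
\item all three of $s_{13},s_{14},s_{15},s_{23},s_{24},s_{25}$-edges $s_{1k}\adj s_{2k}$ are present, so both missing sums avoid these six positions;
\item exactly one sum point $s_{ij}$ satisfies $z_i+z_j=\sum_{r\ne i,j}z_r$ and has $q=3$, while every other sum point has $q=1$.
\end{itemize}

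The main obstacle is killing this extremal configuration. I would attack it through the type B points. A type B point $y$ with unique neighbour $z_r$ meets at most two sums of the form $s_{rj}$, by (P1) applied to $z_r$ and $y$; these are located by Lemma~\ref{lem:rules}(iv). So $y$ has at least two sum neighbours $s_{ij}$ with $r\notin\{i,j\}$. By Lemma~\ref{lem:rules}(vi), this rules out $r\in\{1,2\}$ whenever those sums involve index $1$ or $2$. If two such neighbours share an index, Lemma~\ref{lem:rules}(ii) gives $y=z_k+z_i+z_j$, and together with $y\adj z_r$ this gives a strong equation. I would combine this with the rigid structure forced above, namely the three rhombi on the edge $z_1z_2$ and the unique Lemma~\ref{lem:B} pair. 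The aim is either to contradict Lemma~\ref{lem:five-sums}, or to show that two $z$'s coincide or that a forbidden $s_{ij}=z_k$ relation holds, as in Lemma~\ref{lem:rules}(v). I expect this final step to need a short enumeration of which two sums are missing and where the exceptional pair sits.
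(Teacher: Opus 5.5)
There is a genuine gap at the final step. Your treatment of $e=0$ and of the subcase $(a,b,c)=(9,0,1)$ is correct. For $(9,0,1)$ you invoke Lemma~\ref{lem:five-sums}, which is a little quicker than the paper's route; the paper instead shows that $s_{34},s_{35},s_{45}$ would all need $q\ge2$. In the subcase $(8,2,0)$, your count with $E,F,h$ is equivalent to the paper's decomposition $s(\sigma)+q(\sigma)+n(\sigma)=3$, and the extremal configuration you extract is the right one. But the proposal never excludes this configuration. The suggested attack through type B points, Lemma~\ref{lem:rules}(iv),(vi) and an unspecified enumeration is only a plan, not an argument, so the proof is incomplete.

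The missing step is short and uses only what you have already derived:
\begin{itemize}
\item Since $s_{13},s_{14},s_{15}$ are all present, $z_1$ already has the five neighbours $0,z_2,s_{13},s_{14},s_{15}$. Hence $s_{12}\notin S$.
\item Let $\sigma_0$ be the unique sum point with $q(\sigma_0)=3$. All three of its neighbours in $O$ are then sums with disjoint index pairs, so it has no common-index sum neighbour.
\item This rules out $\sigma_0=s_{1k}$ or $\sigma_0=s_{2k}$ with $k\ge3$, because you showed that each $s_{1k}$ is adjacent to $s_{2k}$.
\item Hence $\sigma_0=s_{kl}$ with $k,l\ge3$. One of the three sums whose index pair is disjoint from $\{k,l\}$ is the absent $s_{12}$, so $q(\sigma_0)\le2$, a contradiction.
\end{itemize}
This is how the paper finishes; no analysis of the type B points is needed.
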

	
	\begin{proof}
		Let $v=0$ and suppose $e\le 1$; if $e=1$, let $z_1\adj z_2$. For a sum point $\sigma=s_{ij}$ let $s(\sigma)$ be the number of its neighbours among the sum points $s_{kl}$ with $|\{k,l\}\cap\{i,j\}|=1$, let $q(\sigma)$ be the number of its neighbours among the sum points $s_{kl}$ with $\{k,l\}\cap\{i,j\}=\emptyset$, and let $n(\sigma)$ be the number of its neighbours of type B or C. Then
		\begin{equation}\label{eq:sqn}
			s(\sigma)+q(\sigma)+n(\sigma)=3 .
		\end{equation}
		If $q(\sigma)\ge 2$, then $\sigma$ is a neighbour of two of the points $s_{rs},s_{rt},s_{st}$, where $\{r,s,t\}$ is the complement of $\{i,j\}$; these two share an index, so Lemma~\ref{lem:rules}(ii) yields $z_i+z_j=z_r+z_s+z_t$. By Lemma~\ref{lem:B},
		\begin{equation}\label{eq:q}
			q(\sigma)\le 1\quad\text{for all sum points $\sigma$ with at most one exception.}
		\end{equation}
		
		\emph{Case $e=0$.} By \eqref{eq:abc}, $a=10$ and $b=c=0$. By Lemma~\ref{lem:rules}(i), $s(\sigma)=0$, hence $q(\sigma)=3$ for all ten sum points, contradicting \eqref{eq:q}.
		
		\emph{Case $e=1$.} By \eqref{eq:abc}, $(a,b,c)$ is $(9,0,1)$ or $(8,2,0)$. By Lemma~\ref{lem:rules}(i), $s(\sigma)=1$ if $\sigma\in\{s_{1k},s_{2k}\}$ for some $k\ge 3$ and both $s_{1k},s_{2k}$ are sum points, and $s(\sigma)=0$ otherwise.
		
		Let $(a,b,c)=(9,0,1)$. Each $z_k$, $k\ge3$, has four neighbours in $O$, all of them sum points, so all nine $s_{ij}$ with $\{i,j\}\ne\{1,2\}$ are sum points. For $\sigma\in\{s_{34},s_{35},s_{45}\}$ we have $s(\sigma)=0$ and $n(\sigma)\le 1$, hence $q(\sigma)\ge 2$ for three sum points, contradicting \eqref{eq:q}.
		
		Let $(a,b,c)=(8,2,0)$. Each of the two points of type B has four neighbours besides its neighbour in $Z$, so $\sum_\sigma n(\sigma)\le 8$. On the other hand, $\sum_\sigma s(\sigma)\le 6$, and by \eqref{eq:q} we have $\sum_\sigma q(\sigma)\le 7+3$. By \eqref{eq:sqn}, $\sum_\sigma n(\sigma)\ge 24-6-10=8$. Hence equality holds everywhere: all six points $s_{1k},s_{2k}$ ($k\ge 3$) are sum points, and there is a sum point $\sigma_0$ with $q(\sigma_0)=3$, $s(\sigma_0)=0$. Since $z_1$ has only three neighbours in $O$, we get $s_{12}\notin O$. As $s(\sigma_0)=0$, we have $\sigma_0=s_{kl}$ with $k,l\ge3$; but one of the three sums with index set disjoint from $\{k,l\}$ is $s_{12}\notin O$, so $q(\sigma_0)\le 2$, a contradiction.
	\end{proof}
	
	\begin{corollary}\label{cor:three}
		There is a point $P\in S$ with $t(P)\ge 3$. For such a point, either four of its neighbours are consecutive vertices of a regular hexagon with centre $P$, or $t(P)=3$ and the unit distances among its neighbours form a path with three vertices and a disjoint edge.
	\end{corollary}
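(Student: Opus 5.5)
The plan is to combine a global triangle count with the local structure (P2) at each point. For the existence of $P$, count unit triangles in $S$. Each unit triangle has three vertices, so
\[
\sum_{v\in S} t(v)=3T,
\]
where $T$ is the number of unit triangles. By Lemma~\ref{lem:two}, $t(v)\ge2$ for every $v$. If no point had $t(v)\ge3$, then $t(v)=2$ for all $v$, and $3T=2N=32$. Since $32$ is not divisible by $3$, this is impossible. Hence some $P$ has $t(P)\ge3$. This parity-type step is the only place where \eqref{eq:16} enters, and it is the key idea; the rest is local geometry.

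For the structural statement, translate $P$ to $0$. The five neighbours lie on the unit circle about $0$. By (P2), their unit-distance graph is a disjoint union of paths, each consisting of consecutive vertices of the regular hexagon centred at $0$, and $t(P)=e$ is the number of edges of this graph. A disjoint union of paths on five vertices has $e=5-(\text{number of components})$. So $e\ge3$ means at most two components, with the following options.
\begin{itemize}
\item[(a)] A single path on five vertices ($e=4$).
\item[(b)] Two components of sizes $4+1$ ($e=3$).
\item[(c)] Two components of sizes $3+2$ ($e=3$).
\end{itemize}
Note that $e=5$ is impossible: it would require a $5$-cycle, and a path has only $4$ edges on $5$ vertices.

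In cases (a) and (b) some path has at least four vertices, and its first four vertices are four consecutive hexagon vertices. This gives the first alternative. In case (c) we have $t(P)=3$, and the graph is a three-vertex path together with a disjoint edge, which is the second alternative.

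I expect no real obstacle. The one point to check is that (P2) applies, which needs at most five points on the circle; it does, since $P$ has exactly five neighbours. The path-on-hexagon structure then rules out any cycle, which excludes $e=5$ and a $5$-vertex cycle configuration.
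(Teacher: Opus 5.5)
Your proposal is correct and follows the paper's proof. Lemma~\ref{lem:two} and the count $3T=\sum_v t(v)=32$ give a point with $t\ge3$, and (P2) then yields the dichotomy, which you spell out in more detail than the paper does. Two small wording points: the key step is divisibility by $3$ rather than parity, and $N=16$ is also used inside Lemma~\ref{lem:two} itself, not only in this count.
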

	
	\begin{proof}
		If $t(v)=2$ for all $v$, then counting incidences gives $3T=2\cdot 16=32$ for the number $T$ of unit triangles, which is impossible (this replaces Case 3 of \cite{H}). The second statement follows from (P2).
	\end{proof}

	\section{Four consecutive hexagon neighbours}\label{sec:hexagon}
	
	We first dispose of the two local configurations in which four or five consecutive vertices of a regular hexagon occur around one point. The argument below replaces Cases 4 and 5 of \cite{H}; no six-point core needs to be excluded in advance.
	
	\begin{lemma}\label{lem:fourtri}
		No point of $S$ lies in four unit triangles.
	\end{lemma}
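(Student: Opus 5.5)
Suppose $P=0$ has $t(0)\ge4$. By (P2) the unit distances among the five neighbours form disjoint paths of consecutive hexagon vertices, and four edges on five vertices force a single path $z_1\adj z_2\adj z_3\adj z_4\adj z_5$. After a rotation, $z_k=\w^{k-1}$ for $k=1,\dots,5$, so $e=4$. Then \eqref{eq:abc} gives $a-c=2$ and $b=12-2a$. I will list the possible sum points and count incidences, aiming for a contradiction with the degree conditions in $O$.

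\emph{Step 1: sum positions.} We have $s_{13}=z_2$, $s_{24}=z_3$, $s_{35}=z_4$ and $s_{14}=0$; none of these lies in $O$. The remaining six sums are $s_{12},s_{23},s_{34},s_{45}$ (length $\sqrt3$), $s_{25}=\w+\w^4=0$, which is again excluded, and $s_{15}=1+\w^4=\wb$, which is a unit vector. Since $\wb$ is the sixth hexagon vertex, it is not in $S$, because $t(0)$ would then be $\ge5$ and $0$ would have six neighbours. Thus at most four sum points exist, namely $s_{12},s_{23},s_{34},s_{45}$, and these lie in a $1\times 4$ strip of the triangular lattice. Hence $a\le4$, so $c=a-2\le2$ and $b=12-2a\ge4$.

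\emph{Step 2: degrees of $z_i$.} The endpoints $z_1,z_5$ have two neighbours in $\{0\}\cup Z$, so each needs three in $O$. The inner points $z_2,z_3,z_4$ need two each. Point $z_2$ can meet $O$ only through $s_{12},s_{23}$ and type B points, and similarly for the others.

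\emph{Step 3: the obstruction.} I will split into the cases $a=4,3,2$, with $(b,c)=(4,2),(6,1),(8,0)$. The four sums $s_{12},s_{23},s_{34},s_{45}$ are consecutive lattice points at mutual distance $1$ (by Lemma~\ref{lem:rules}(i)), so they form a path with three edges. The type B and C points, of which there are at most $8$, must supply the remaining degree. Two tools give the contradiction. First, Lemma~\ref{lem:rules}(iii),(vi) strongly restricts which type B points can touch which sum points. Second, Lemma~\ref{lem:eisenstein-circle} shows that any point adjacent both to $0$'s lattice configuration and to a lattice point must itself be a lattice point, so the local structure is largely forced into $\mathbb Z[\w]$. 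In the lattice case I will use Fact~\ref{fact:six} on six-point subsets with nine unit distances, for example $\{0,z_1,\dots,z_5\}$ together with $s_{12}$ and similar sets, to identify forced rigid cores. The aim is to show that the type C points, which need five neighbours in $O$ and hence many among the sum points and type B points, cannot exist. Lemma~\ref{lem:five-sums} and the distance constraints near the strip will be used here.

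I expect the main obstacle to be the case $a=4$, $c=2$. There the two type C points need almost all their neighbours among the type B points, so a genuine incidence count over $O$ is required. I would first try the degree sum in $O$: sum points have degree $3$ in $O$, type B points degree $4$, and type C points degree $5$. This count, combined with the bound of at most $2$ neighbours per pair via (P1), should give an inequality that fails. If it does not, I would argue that some other point (for instance $z_1$) also lies in $\ge3$ triangles, and apply the same lattice analysis there.
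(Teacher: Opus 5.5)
Your Step~1 is correct, but Step~3 contains a false claim and no actual argument. Lemma~\ref{lem:rules}(i) gives $|s_{12}-s_{23}|=|z_1-z_3|=\sqrt3$, not $1$. The points $s_{12},s_{23},s_{34},s_{45}$ are four consecutive vertices of the hexagon of radius $\sqrt3$ about $0$, with mutual distances $\sqrt3$, $3$ and $2\sqrt3$. So they span no unit distance and do not form a lattice strip. Consequently every present sum point must take all three of its neighbours in $O$ from types B and C, and your intended count starts from the wrong edge number.

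Apart from this, Step~3 only lists tools. A bare degree count over $O$ cannot fail by itself: $3a+4b+5c=38$ in each of your three cases. Fact~\ref{fact:six} has no evident role, since $\{0,z_1,\dots,z_5\}$ is simply $T_1$. Your stated aim of ruling out type C points points the wrong way, because even $c=0$ leaves $a=2$, $b=8$ to be excluded. The fallback to $z_1$ is unjustified, since nothing forces $t(z_1)\ge3$.

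The missing idea is a local supply bound showing that type C points are \emph{forced}, one per sum point. Take a present $s_{i,i+1}$.
\begin{itemize}
\item The other points of $Z$ are at distance $2$ from it, in which case the tangency point lies in $Z$, or at distance $\sqrt7$, in which case there is no common unit-distance point.
\item By (P1), the common unit-distance points of $s_{i,i+1}$ and $z_i$ are $\{z_{i+1},2z_i\}$, and those of $s_{i,i+1}$ and $z_{i+1}$ are $\{z_i,2z_{i+1}\}$.
\item Hence at most two of its three neighbours in $O$, namely $2z_i$ and $2z_{i+1}$, have a neighbour in $Z$. At least one is of type C.
\end{itemize}
No type C point can serve two present sum points. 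Non-consecutive ones are at distance $3$ or $2\sqrt3$. Consecutive ones $s_{i-1,i}$ and $s_{i,i+1}$ have only the common unit-distance points $z_i$ and $2z_i$, by Lemma~\ref{lem:rules}(ii) together with $z_{i-1}+z_{i+1}=z_i$. Therefore $c\ge a$, which contradicts $a-c=2$. This works uniformly in $a$ and needs no lattice or six-point analysis; it is essentially the paper's argument.
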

	
	\begin{proof}
		Suppose that $0$ has five neighbours $z_1,\ldots,z_5$ which are consecutive vertices of a regular hexagon. For $i=1,\ldots,4$ put
		\[
		s_i=z_i+z_{i+1}.
		\]
		Any point of $O$ with two neighbours in $Z$ is one of the points $s_i$. Indeed, for two neighbours separated by two hexagon steps the second circle intersection is the intermediate vertex, for opposite neighbours it is $0$, and the sum $z_1+z_5$ is the sixth hexagon vertex and cannot belong to $S$ because $0$ already has five neighbours.
		
		Let $a$ be the number of the points $s_i$ that belong to $S$, and let $c$ be the number of points of type C. Since $e=4$, \eqref{eq:abc} gives
		\begin{equation}\label{eq:e4ac}
			a=c+2.
		\end{equation}
		Fix a present point $s_i$. It already has the neighbours $z_i,z_{i+1}$. If another neighbour of $s_i$ has a neighbour in $Z$, the two-circle rule (P1), applied to $s_i$ and the relevant $z_j$, shows that this point is one of $2z_i,2z_{i+1}$ (apart from $z_i,z_{i+1}$ themselves). Indeed the remaining distances from $s_i$ to the five points of $Z$ are $2$, $2$, and $\sqrt7$. Hence at least one of the three remaining neighbours of $s_i$ is of type C.
		
		These type C neighbours are distinct for different present $s_i$. For $|i-j|\ge2$ we have $|s_i-s_j|>2$. If $j=i+1$, then $|s_i-s_{i+1}|=\sqrt3$, and their two common unit-distance points are $z_{i+1}$ and $2z_{i+1}$, neither of type C. Thus every present $s_i$ requires its own type C neighbour, and $c\ge a$, contradicting \eqref{eq:e4ac}.
	\end{proof}
	
	\begin{lemma}[Four-chain supply lemma]\label{lem:four-supply}
		Suppose that $0$ has four consecutive hexagon neighbours $z_1,z_2,z_3,z_4$ and a fifth neighbour $z_5$ which is adjacent to none of them. Then, after ordering the four consecutive directions,
		\[
		s_{12},s_{23},s_{34}\notin S, \qquad s_{15},s_{25},s_{35},s_{45}\in S.
		\]
		Moreover the remaining six points of $O$ are all of type B and span nine unit distances.
	\end{lemma}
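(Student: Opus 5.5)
The plan is to pin down the local counts of \eqref{eq:abc} and then use the degree bounds at $z_2,z_3$ and the rhombus rules to force the stated sum positions. Take $z_k=\w^{k-1}$ for $k=1,\dots,4$. Since $z_5$ is adjacent to none of $z_1,\dots,z_4$, it is not a hexagon vertex at all: the two free hexagon positions $\wb$ and $\w^4$ are adjacent to $z_1$ and $z_4$. Hence $z_5$ is antipodal to no $z_i$. Consequently the sums $s_{i5}$ are nonzero, and none of them lies in $Z$. Indeed, $s_{i5}=z_k$ would need $z_i\adj z_k\adj z_5$.

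Among the remaining sums, $s_{13}=z_2$, $s_{24}=z_3$ and $s_{14}=0$. So the candidate sum positions in $O$ are exactly the seven points $s_{12},s_{23},s_{34},s_{15},s_{25},s_{35},s_{45}$. Here $e=3$, so \eqref{eq:abc} gives $a-c=4$ and $b=14-2a$. With $a\le7$ and $a+b+c=10$, the possible triples are
\[
(a,b,c)\in\{(7,0,3),(6,2,2),(5,4,1),(4,6,0)\}.
\]
The points $z_2$ and $z_3$ already have the three neighbours $0,z_1,z_3$ and $0,z_2,z_4$, respectively. Each therefore has only two neighbours in $O$. In particular, at most two of $s_{12},s_{23},s_{25}$ are present, and at most two of $s_{23},s_{34},s_{35}$.

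The main step is to exclude $c>0$ and any present $s_{i,i+1}$, and I expect it to be the obstacle. Take a present $s_{12}$. Its distances to $z_3,z_4$ are $2$ and $\sqrt7$, and $s_{12}\not\adj z_5$ because sums are distinct. Hence its three neighbours in $O$ are $2z_1$, $2z_2$ (which lies on the circle of $z_3$), points of type C, or points adjacent only to $z_5$. The latter are either sum points $s_{k5}$ or type B points with neighbour $z_5$.

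Lemma~\ref{lem:rules}(iii) and (v) control adjacencies $s_{12}\adj s_{k5}$; for instance, the common points of $s_{12}$ and $z_5$ are limited by (P1). Lemma~\ref{lem:rules}(vi) restricts type B neighbours of $s_{12}$ to those attached to $z_1$, $z_2$, $z_4$ or $z_5$, and excludes $z_4$ only if $z_4$ is adjacent to $z_1$, which it is not. For type C points I would use Lemma~\ref{lem:five-sums}: a type C point has five neighbours in $O$, at most four of them sum points, so it needs type B neighbours. This rules out $(7,0,3)$ immediately. For $(6,2,2)$ and $(5,4,1)$ I would run the same kind of incidence count as in the proof of Lemma~\ref{lem:fourtri}: show that each present $s_{i,i+1}$, together with the constraint at $z_2,z_3$, forces a private type C neighbour or an impossible adjacency. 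Combined with $a=c+4$ and the fact that at most four of the $s_{k5}$ exist, this should force $c=0$, $a=4$, and the four present sum points equal to $s_{15},s_{25},s_{35},s_{45}$.

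Once $(a,b,c)=(4,6,0)$ with present sum points $s_{k5}$ is established, the final count is routine. Sum points sharing the index $5$ satisfy $|s_{i5}-s_{j5}|=|z_i-z_j|$ by Lemma~\ref{lem:rules}(i), so the only sum–sum edges are $s_{15}s_{25}$, $s_{25}s_{35}$ and $s_{35}s_{45}$. These account for $6$ of the $4\cdot3=12$ incidences from sum points into $O$. Hence there are exactly $6$ incidences between sum points and the six type B points. Each type B point has four neighbours in $O$, which gives $24=6+2E_B$, so the type B points span $E_B=9$ unit distances.
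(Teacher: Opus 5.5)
Your setup (the seven candidate positions, $a-c=4$, $b=14-2a$, the fact that $z_2,z_3$ have only two neighbours in $O$) and your closing count of nine unit distances among the type B points agree with the paper. Your reduction is also correct: if no $s_{i,i+1}$ is present, then $a\le4$ forces $c=0$ and all four $s_{k5}$ present.

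\textbf{The gap.} The central step is only asserted to ``should'' follow. Nothing in the proposal actually excludes $c>0$ or a present $s_{12},s_{23},s_{34}$. The mechanism you borrow from Lemma~\ref{lem:fourtri} fails here, for two reasons.
\begin{itemize}
\item There the fifth neighbour was also a hexagon vertex, so the other distances from $s_i$ to $Z$ were $2,2,\sqrt7$, and a type C neighbour was forced. Now $z_5$ is generic. Nothing in your local analysis stops a present $s_{12}$ from being served by $2z_1\in B_1$, by $2z_2\in B_2$ (this needs $s_{23},s_{25}$ missing, which is locally consistent), and by a common point of $s_{12}$ and $z_5$ lying in $B_5$. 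Then $s_{12}$ has no type C neighbour at all.
\item Even if each present $s_{i,i+1}$ did force a private type C point, let $p$ be their number. You would get $c\ge p$ together with $a=c+4\le4+p$, which is consistent ($c=p$, $a=4+p$). In Lemma~\ref{lem:fourtri} the contradiction came from $a=c+2$, which you do not have here.
\end{itemize}

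\textbf{The missing idea} is the paper's global double count, which needs three facts.
\begin{itemize}
\item[(1)] Among the seven candidate sums the only unit distances are $s_{15}\adj s_{25}\adj s_{35}\adj s_{45}$, so $E\le3$. Some disjoint pairs, such as $s_{12},s_{45}$, are not covered by Lemma~\ref{lem:rules}(iii),(v) and need the triangle inequality or Lemma~\ref{lem:eisenstein-circle}.
\item[(2)] Every type B point is adjacent to at most one present sum point. An exhaustive check with Lemma~\ref{lem:eisenstein-circle} shows the only possible double supplies are $2z_2$ for $\{12,23\}$ and $2z_3$ for $\{23,34\}$. These are incompatible with $|B_2|=m_2-1$ and $|B_3|=m_3-1$ when both sums are present.
\item[(3)] Every type C point is adjacent to at most four sum points, by Lemma~\ref{lem:five-sums}.
\end{itemize}
With these, $6+3c\le 3a-2E=F\le b+4c=6+2c$. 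This forces $c=0$, $E=3$ and $a=4$, so exactly $s_{15},\dots,s_{45}$ are present.

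\textbf{Smaller slips.}
\begin{itemize}
\item $2z_2$ is at distance $\sqrt3$ from $z_3$, so it does not lie on the unit circle of $z_3$.
\item $(7,0,3)$ is not excluded by ``type C needs type B neighbours'', because type C points may be adjacent to each other. It is excluded instead by your own observation that one of $s_{12},s_{23},s_{25}$ must be missing.
\end{itemize}
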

	
	\begin{proof}
		Normalize
		\[
		z_1=1,\qquad z_2=\w,\qquad z_3=\w^2,\qquad z_4=-1.
		\]
		Since $z_5$ is distinct from and adjacent to none of $z_1,\ldots,z_4$, it is not a sixth root of unity: the two remaining sixth roots $\w^4,\w^5$ are adjacent to $z_4,z_1$, respectively. Hence no sum involving $z_5$ can be $0$ or a point of $Z$; otherwise $z_5$ would be the opposite of, or a unit difference of, two sixth roots and hence itself a sixth root. Also $|s_{12}|=|s_{23}|=|s_{34}|=\sqrt3$, so these three sums lie outside $Z\cup\{0\}$. Together with the preceding uniqueness observation, the seven sums below are distinct and lie outside $Z\cup\{0\}$. Since $s_{13}=z_2$, $s_{24}=z_3$ and $s_{14}=0$, the only possible sum points in $O$ are
		\begin{equation}\label{eq:seven-fourchain}
			12,23,34,15,25,35,45,
		\end{equation}
		where, here and below, $ij$ abbreviates $s_{ij}$. Let $M$ be the set of missing points from \eqref{eq:seven-fourchain}, put $m=|M|$, and let $m_i$ count the members of $M$ containing the index $i$. Since $a=7-m$ and $a-c=4$, we have
		\begin{equation}\label{eq:fourchain-mc}
			m+c=3,\qquad b=2m.
		\end{equation}
		Counting the missing sum neighbours at the five points $z_i$ gives
		\begin{equation}\label{eq:fourchain-B}
			|B_1|=1+m_1,\quad |B_2|=m_2-1,\quad |B_3|=m_3-1,
			\quad |B_4|=1+m_4,\quad |B_5|=m_5.
		\end{equation}
		In particular $m_2,m_3\ge1$.
		
		Among the seven possible sum points, the only unit distances are
		\begin{equation}\label{eq:fourchain-path}
			15\adj25\adj35\adj45.
		\end{equation}
		For pairs with a common index this follows from Lemma~\ref{lem:rules}(i). Among disjoint pairs, $|s_{12}-s_{34}|=3$. For each of the other six pairs, after multiplication by a unit of $\mathbb Z[\w]$, the difference has the form $z_5+u$, where $u\in\mathbb Z[\w]$ has squared length $4$ or $7$. Lemma~\ref{lem:eisenstein-circle} therefore excludes a further unit distance. Let $E$ denote the number of edges induced by the present sum points. Thus
		\begin{equation}\label{eq:fourchain-E}
			E\le3.
		\end{equation}
		
		We also need one elementary supply observation. A type B point can be adjacent to two of the seven sums only in one of the following two ways:
		\[
		B_2:\ \{12,23\},\qquad B_3:\ \{23,34\}.
		\]
		The corresponding points are $2z_2$ and $2z_3$. We verify this exhaustively. If two sums share an index, Lemma~\ref{lem:rules}(ii) gives their second common unit-distance point. Besides the two displayed pairs, the possibilities are as follows. The pairs
		\[
		\begin{split}
			& \{12,15\},\{12,25\},\{15,25\}, \\
			& \{23,25\},\{23,35\},\{25,35\}, \\
			& \{34,35\},\{34,45\},\{35,45\}
		\end{split}
		\]
		have second common point $z_5+z_i+z_{i+1}$ for $i=1,2,3$, respectively. For any $r\le4$, adjacency of such a point to $z_r$ is an equation $|z_5+u|=1$ with $u\in\mathbb Z[\w]$ and $|u|\in\{1,2,\sqrt7\}$; adjacency to $z_5$ would require $|z_i+z_{i+1}|=1$, which is false. Lemma~\ref{lem:eisenstein-circle} therefore shows that these points are not of type B. The three remaining shared-index pairs have second common point $25$, $z_5$, or $35$, hence a sum point or a point of $Z$.
		
		For disjoint index pairs, $12$ and $34$ are at distance $3$ and have no common unit-distance point. For each of the other six pairs, Lemma~\ref{lem:rules}(vi), applied to both sums, restricts the possible unique $Z$-neighbour $z_r$ of a type B point. In the order of the table below the admissible indices are
		\[
		\{1,5\},\quad \{1,2,4,5\},\quad \{3,5\},\quad \{2,5\},\quad \{1,3,4,5\},\quad \{4,5\}.
		\]
		Applying Lemma~\ref{lem:rules}(iv) to one of the sums then leaves two candidates. After division by a unit of $\mathbb Z[\w]$, the second adjacency has the form $|z_5+u|=1$; the possible squared lengths of $u$ are
		\[
		\begin{array}{c|c}
			\{12,35\} & 1,7     \\
			\{12,45\} & 3,4,7,9 \\
			\{23,15\} & 1,7     \\
			\{23,45\} & 1,7     \\
			\{34,15\} & 3,4,7,9 \\
			\{34,25\} & 1,7.
		\end{array}
		\]
		Lemma~\ref{lem:eisenstein-circle} excludes every case. Thus the two displayed double supplies are the only possibilities.
		
		In fact the two exceptional double supplies cannot occur for present sum points. If $B_2\ne\emptyset$, then \eqref{eq:fourchain-B} gives $m_2\ge2$; but if both $12$ and $23$ were present, among the three possible sums $12,23,25$ at most one could be missing, so $m_2\le1$. The argument for $B_3$ is identical. Consequently every type B point is adjacent to at most one present sum point. By Lemma~\ref{lem:five-sums}, every type C point is adjacent to at most four present sum points.
		
		Let $F$ be the number of unit distances from the sum points to the points of types B and C. Each sum point has degree three inside $O$, hence
		\[
		F=3a-2E\ge 3(4+c)-6=6+3c.
		\]
		On the other hand the preceding supply bounds and $b=6-2c$ from \eqref{eq:fourchain-mc} give
		\[
		F\le b+4c=6+2c.
		\]
		Therefore $c=0$. Equality must hold throughout, so $E=3$, $a=4$, $b=6$. By \eqref{eq:fourchain-path}, $E=3$ forces all four points $15,25,35,45$ to be present. Since there are exactly four sum points, the three points $12,23,34$ are absent.
		
		The four sum points span three unit distances, so the number of unit distances from them to the six type B points is $4\cdot3-2\cdot3=6$. The six type B points have total degree $24$ inside $O$, hence they span
		\[
		\frac{24-6}{2}=9
		\]
		unit distances.
	\end{proof}
	
	\begin{lemma}\label{lem:hex}
		No point of $S$ has four neighbours which are consecutive vertices of a regular hexagon centred at that point.
	\end{lemma}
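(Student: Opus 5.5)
We first reduce to the situation of Lemma~\ref{lem:four-supply}. Suppose $0$ has four neighbours $z_1,\dots,z_4$ that are consecutive vertices of the unit hexagon about $0$, normalized as $z_1=1$, $z_2=\w$, $z_3=\w^2$, $z_4=-1$. Suppose the fifth neighbour $z_5$ were adjacent to one of them. By (P2) it would then be a hexagon vertex adjacent to an end of the chain, namely $\w^4$ or $\w^5$. Then $0$ would have five consecutive hexagon neighbours, which is excluded by Lemma~\ref{lem:fourtri}. So $z_5$ is adjacent to none of $z_1,\dots,z_4$, and Lemma~\ref{lem:four-supply} applies. Hence $s_{12},s_{23},s_{34}\notin S$ and $s_{15},s_{25},s_{35},s_{45}\in S$. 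Moreover the six remaining points of $O$ form a set $B$ of type B points spanning nine unit distances.

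Next we pin down the structure of $B$ exactly. With $M=\{12,23,34\}$ we have $m_1=m_4=1$, $m_2=m_3=2$ and $m_5=0$. So \eqref{eq:fourchain-B} gives
\[
|B_1|=|B_4|=2,\qquad |B_2|=|B_3|=1,\qquad B_5=\emptyset .
\]
Each type B point has degree $4$ inside $O$. By the proof of Lemma~\ref{lem:four-supply} it is adjacent to at most one present sum point, and exactly $6$ unit distances run between $B$ and the sum points. Hence every point of $B$ has exactly one sum neighbour and exactly three neighbours inside $B$. Thus the unit-distance graph on $B$ is $3$-regular with nine edges. By Fact~\ref{fact:six} it is congruent to $T_1,T_2,T_3$ or a prism. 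The first three have vertices of degree $\ne3$, so $B=\Delta\cup(\Delta+v)$ is a prism, and any two of its points are at distance less than $2$.

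Finally we derive a contradiction from the position of this prism relative to $Z$. The two points of $B_1$ lie on the unit circle about $1$, and the two points of $B_4$ lie on the unit circle about $-1$. Two points on these circles are at distance at most $2$ from each other only near $0$, with equality exactly at $0\notin B$. The plan is to quantify this: the prism diameter is $<2$, and no prism point is at distance $\le1$ from $0$, since $0$'s neighbours are in $Z$. These constraints should force the $B_1$ and $B_4$ pairs into a small region. Combined with the rhombus structure, we then aim to show that some prism vertex would have to be adjacent to two points of $Z$, or to two sum points, contradicting the type B property. The tools for this are the circle-intersection rules of Lemma~\ref{lem:rules}(iii),(iv) applied at $z_1$ and $z_4$.

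An alternative route uses the three prism neighbours of each $B_1$ point together with (P1). One would derive explicit coordinates of the prism in terms of $z_5$, and then show that an extra unit distance from $B$ to $Z$ or to a sum point appears. That forces an equation $|z_5+u|=1$ with $u\in\mathbb Z[\w]$ of length in $\{1,\sqrt3,2,\sqrt7,3\}$, and Lemma~\ref{lem:eisenstein-circle} rules such equations out.

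This last geometric step is the main obstacle. The structural reduction to a prism with prescribed $Z$-neighbour counts $(2,1,1,2,0)$ is mechanical, but turning it into a contradiction requires a careful, possibly case-split, placement argument. I would first try the distance argument between $B_1$ and $B_4$ across the prism. Failing that, I would use explicit coordinates and the Eisenstein circle observation.
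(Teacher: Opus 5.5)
Your reduction to Lemma~\ref{lem:four-supply} and your identification of $B$ as a prism are correct. Deriving $3$-regularity of $B$ from the supply bound in the proof of Lemma~\ref{lem:four-supply} is a valid, slightly shorter alternative to the paper's elimination of $T_1,T_2,T_3$ via Lemmas~\ref{lem:fourtri} and~\ref{lem:four-supply}; that bound says each type B point has at most one present sum neighbour, and there are exactly six such incidences.

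The gap is the final step, which is the whole content of the lemma once the prism is known. Your sketch for it rests on false claims. Points on the unit circles about $1$ and $-1$ can be within distance $2$ far from $0$: $1+i$ and $-1+i$ are at distance exactly $2$, and $1+i$, $-1+e^{i\pi/4}$ are at distance about $1.33$. Nor is a point of $B$ kept out of the open unit disc about $0$; only distance exactly $1$ is forbidden. The coordinate route is also unsupported. A point of $B_1$ adjacent to $s_{35}$ or $s_{45}$ is an intersection of two unit circles whose centres depend continuously on $z_5$. Such a point is in general not of the form $u+z_5u'$ with $u,u'\in\mathbb Z[\w]$, so Lemma~\ref{lem:eisenstein-circle} has nothing to act on.

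What is missing is an assignment argument followed by an orientation argument.
\begin{enumerate}
\item By \eqref{eq:fourchain-path}, the sums $s_{15},s_{25},s_{35},s_{45}$ need exactly $2,1,1,2$ neighbours in $B$. Since each point of $B$ has exactly one sum neighbour, $B$ is partitioned accordingly.
\item Lemma~\ref{lem:rules}(iii) forbids the adjacencies $B_1$--$s_{25}$ and $B_4$--$s_{35}$.
\item Suppose both points of $B_1$ were adjacent to $s_{15}$. Writing $\zeta=z_5$, Lemma~\ref{lem:rules}(iv) gives $B_1=\{z_1+\zeta\w,z_1+\zeta\wb\}$. Some point of $B_4$ must then be adjacent to $s_{45}$, hence equal $z_4+\zeta\w^{\pm1}$. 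That point is at distance exactly $|z_4-z_1|=2$ from a point of $B_1$, contradicting the prism diameter bound; the diameter bound enters only here. The same argument applies to $B_4$ and $s_{45}$.
\item Consequently some point of $B_1$ is adjacent to $s_{35}$ or $s_{45}$, and some point of $B_4$ to $s_{15}$ or $s_{25}$. This requires the corresponding sum to lie within distance $2$ of $z_1$, respectively $z_4$.
\item The point $\zeta$ is itself a centre of the same configuration, with four-chain $\zeta+z_1,\dots,\zeta+z_4$ and fifth direction $-\zeta$. So you may assume $\zeta=e^{i\theta}$ with $0<\theta<\pi$.
\item Now compute $|s_{35}-z_1|^2=4+2\sqrt3\cos(\theta-5\pi/6)$, $|s_{45}-z_1|^2=5-4\cos\theta$, $|s_{25}-z_4|^2=4+2\sqrt3\cos(\theta-\pi/6)$ and $|s_{15}-z_4|^2=5+4\cos\theta$. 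The first requirement forces $\cos\theta\ge1/4$ and the second forces $\cos\theta\le-1/4$.
\end{enumerate}
Neither the demand assignment nor the centre swap fixing the sign of $\operatorname{Im}\zeta$ appears in your proposal, and without them you reach no contradiction.
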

	
	\begin{proof}
		Suppose first that the fifth neighbour continues the hexagon path. Then the point lies in four unit triangles, contrary to Lemma~\ref{lem:fourtri}. Thus the fifth neighbour is isolated from the four consecutive ones, and Lemma~\ref{lem:four-supply} applies.
		
		Let
		\[
		S_1=\{s_{15},s_{25},s_{35},s_{45}\}
		\]
		and let $B$ be the six type B points. By Lemma~\ref{lem:four-supply}, $B$ spans nine unit distances. Fact~\ref{fact:six} gives four possibilities. The configuration $T_1$ contains a point with five consecutive hexagon neighbours and is excluded by Lemma~\ref{lem:fourtri}. In $T_2$, the point $(0,0)$ has the four consecutive neighbours $(1,0),(0,1),(-1,1),(-1,0)$ and the corresponding middle completion $(-1,2)$ also belongs to $T_2$. In $T_3$ the same four neighbours occur and the completion $(1,1)$ belongs to $T_3$. For either $T_2$ or $T_3$, the fifth neighbour in $S$ either creates a fourth local unit triangle, already excluded by Lemma~\ref{lem:fourtri}, or is isolated from the four-chain, in which case Lemma~\ref{lem:four-supply} says that all three completions must be absent. Hence $B$ must be a prism.
		
		Thus every point of $B$ has degree three inside $B$, and since it has one neighbour in $Z$, it has exactly one neighbour in $S_1$. Moreover any two points of the prism have distance less than $2$.
		
		Write $z_5=\zeta$. The type B classes are
		\[
		|B_1|=|B_4|=2,\qquad |B_2|=|B_3|=1,\qquad B_5=\emptyset.
		\]
		The path \eqref{eq:fourchain-path} shows that $s_{15},s_{45}$ need two neighbours in $B$, while $s_{25},s_{35}$ need one each. By Lemma~\ref{lem:rules}(iii), no point of $B_1$ is adjacent to $s_{25}$, and no point of $B_4$ is adjacent to $s_{35}$.
		
		Not both points of $B_1$ can be adjacent to $s_{15}$. Otherwise Lemma~\ref{lem:rules}(iv) gives
		\[
		B_1=\{z_1+\zeta\w,z_1+\zeta\wb\}.
		\]
		Every point of $B_4$ must then be assigned to $s_{25}$ or $s_{45}$; since $s_{25}$ has only one neighbour in $B$, some point of $B_4$ is adjacent to $s_{45}$ and has the form $z_4+\zeta\w^\varepsilon$. It is at distance $|z_4-z_1|=2$ from one of the displayed points of $B_1$, contradicting the prism property. The symmetric argument applies to $B_4$ and $s_{45}$. Consequently
		\begin{equation}\label{eq:cross-needs}
			\begin{split}
				& \text{some point of }B_1\text{ is adjacent to }s_{35}\text{ or }s_{45}, \\
				& \text{some point of }B_4\text{ is adjacent to }s_{15}\text{ or }s_{25}.
			\end{split}
		\end{equation}
		
		The preceding conclusions, including \eqref{eq:cross-needs}, were derived for an arbitrary centre satisfying the hypotheses of Lemma~\ref{lem:four-supply}. The point $\zeta$ has the neighbours $0$ and $\zeta+z_i$, $i=1,\ldots,4$; the latter are four consecutive hexagon vertices about $\zeta$, while $0$ is isolated from that four-chain. After translation by $-\zeta$ we therefore obtain the same normal form with the fifth direction $\zeta$ replaced by $-\zeta$. Hence, after interchanging the roles of the two centres if necessary, we may write $\zeta=e^{i\theta}$ with $0<\theta<\pi$. Since $z_4=-z_1$, a common unit-distance point can exist only if the two centres have distance at most $2$, and
		\begin{align*}
			|s_{35}-z_1|^2 & =4+2\sqrt3\cos(\theta-5\pi/6), &
			|s_{45}-z_1|^2 & =5-4\cos\theta,                  \\
			|s_{25}-z_4|^2 & =4+2\sqrt3\cos(\theta-\pi/6),  &
			|s_{15}-z_4|^2 & =5+4\cos\theta.
		\end{align*}
		For $0<\theta<\pi$, the inequality $|s_{35}-z_1|\le2$ holds only for $\theta\le\pi/3$, while $|s_{45}-z_1|\le2$ implies $\cos\theta\ge1/4$. Thus the first line of \eqref{eq:cross-needs} implies $\cos\theta\ge1/4$. Similarly $|s_{25}-z_4|\le2$ holds only for $\theta\ge2\pi/3$, while $|s_{15}-z_4|\le2$ implies $\cos\theta\le-1/4$. Hence the second line implies $\cos\theta\le-1/4$. This contradiction proves the lemma.
	\end{proof}
	
	\begin{corollary}\label{cor:six-prism}
		Every six-point subset of $S$ which spans nine unit distances is a prism.
	\end{corollary}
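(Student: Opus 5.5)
The plan is to apply Fact~\ref{fact:six} to an arbitrary six-point subset $X\subseteq S$ spanning nine unit distances. Then $X$ is congruent to $T_1$, $T_2$, $T_3$, or is a prism. It therefore suffices to show that each of $T_1,T_2,T_3$ forces a configuration already excluded by Lemma~\ref{lem:fourtri} or Lemma~\ref{lem:hex}. Since congruences preserve unit distances, we may identify $X$ with the corresponding lattice set. Only unit distances inside $X$ are used; additional unit distances from $X$ to $S\setminus X$ can only enlarge neighbourhoods, and they do not affect the hexagon-chain hypotheses, which concern the positions of points of $S$.

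For $T_1$, the point $(0,0)$ has the five neighbours $(1,0),(0,1),(-1,1),(-1,0),(0,-1)$ in $T_1$. These are five consecutive vertices of the regular hexagon centred at $(0,0)$ (directions $1,\w,\w^2,\w^3,\w^4$). As $S$ is $5$-regular, they are exactly the five neighbours of $(0,0)$ in $S$. Among them lie the four consecutive hexagon vertices $1,\w,\w^2,\w^3$, contradicting Lemma~\ref{lem:hex}. Alternatively, $(0,0)$ lies in four unit triangles, which contradicts Lemma~\ref{lem:fourtri}.

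For $T_2$ and $T_3$, the point $(0,0)$ has the four neighbours $(1,0),(0,1),(-1,1),(-1,0)$, i.e.\ $1,\w,\w^2,\w^3$. These are four consecutive vertices of the hexagon centred at $(0,0)$, and they lie in $S$. This is precisely the situation excluded by Lemma~\ref{lem:hex}, regardless of the fifth neighbour of $(0,0)$ in $S$.

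Hence $X$ must be a prism. There is essentially no obstacle here; the only point to check is that Lemma~\ref{lem:hex} is stated for arbitrary points of $S$ with four consecutive hexagon neighbours, with no extra hypothesis on the fifth neighbour. That is indeed how it is stated, since its proof first reduces the case of a continuing fifth neighbour to Lemma~\ref{lem:fourtri}.
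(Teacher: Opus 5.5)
Your proposal is correct and follows essentially the same route as the paper: apply Fact~\ref{fact:six}, then exclude $T_1,T_2,T_3$ because each contains the point $(0,0)$ with four consecutive hexagon neighbours, contradicting Lemma~\ref{lem:hex}. Your explicit checks of those neighbours and your note that Lemma~\ref{lem:hex} places no hypothesis on the fifth neighbour simply spell out what the paper's two-line proof leaves implicit.
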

	
	\begin{proof}
		By Fact~\ref{fact:six} the other three possibilities are $T_1,T_2,T_3$. Each contains a point with four consecutive hexagon neighbours, contrary to Lemma~\ref{lem:hex}.
	\end{proof}

	\section{A path and a disjoint edge}\label{sec:path}
	
	By Corollary~\ref{cor:three} and Lemma~\ref{lem:hex}, there is a point, say $0$, whose neighbours satisfy
	\[
	z_1\adj z_2\adj z_3,\qquad z_2=z_1+z_3,\qquad z_4\adj z_5,
	\]
	with no further unit distances among them. This is Case 6 of \cite{H}.
	
	\begin{lemma}\label{lem:case6}
		The path-and-edge configuration is impossible.
	\end{lemma}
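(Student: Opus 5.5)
The plan is to run the same local counting as in Lemma~\ref{lem:four-supply}, now with $e=3$. By \eqref{eq:abc} we have $a=c+4$ and $b=14-2a$. The nine index pairs other than $13$ are the possible sum positions. First I check which of them can coincide with $0$ or with a point of $Z$. A relation $s_{ij}=z_k$ requires $z_i\adj z_k\adj z_j$, which happens only for $13$. A sum $s_{ij}=0$ requires $z_i,z_j$ antipodal, and I treat this as a small side case, using (P1) and the circle arguments at $z_4,z_5$.

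\emph{Constraints at the neighbours.} The point $z_2$ already has the neighbours $0,z_1,z_3$, so at most two of $s_{12},s_{23},s_{24},s_{25}$ are present. I first show that $s_{12}$ and $s_{23}$ may be set aside: a present $s_{12}$ or $s_{23}$ only uses up one of the two free slots at $z_2$. Apart from these two, the candidate positions are the six points $s_{k4},s_{k5}$ ($k=1,2,3$) and $s_{45}$. By Lemma~\ref{lem:rules}(i) the six points form the $2\times3$ ladder with rails $s_{14}\adj s_{24}\adj s_{34}$ and $s_{15}\adj s_{25}\adj s_{35}$ and rungs $s_{k4}\adj s_{k5}$.

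\emph{Bookkeeping.} Let $M$ be the set of missing positions and $m_i$ the number of members of $M$ containing $i$. Counting the missing sum neighbours at each $z_i$ gives
\[
|B_1|=m_1,\qquad |B_3|=m_3,\qquad |B_2|=m_2-2,\qquad |B_4|=m_4-1,\qquad |B_5|=m_5 - 1 .
\]
In particular $m_2\ge2$ and $m_4,m_5\ge1$, as in the $N=14$ analysis.

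\emph{Adjacencies among the sum points.} Let $E$ be the number of unit distances among the present sum points. Common-index adjacencies occur only along the path $z_1z_2z_3$ and the edge $z_4z_5$, by Lemma~\ref{lem:rules}(i). These are the ladder edges, together with $s_{12}\adj s_{23}$ and the edges $s_{45}$--$s_{k4}$ or $s_{k5}$, which I rule out case by case. For disjoint index pairs, Lemma~\ref{lem:rules}(iii) excludes most adjacencies: for instance $s_{14}\adj s_{35}$ would give $y\in\{z_1,s_{25}\}$-type contradictions. Lemma~\ref{lem:rules}(ii) together with Lemma~\ref{lem:B} shows that $q(\sigma)\ge2$ holds for at most one sum point. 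This gives an upper bound for $E$ in terms of $M$.

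\emph{Supply count.} Let $F$ be the number of unit distances from the sum points to points of type B or C. Then $F=3a-2E$. For the upper bound I first determine, via Lemma~\ref{lem:rules}(ii),(iii),(v),(vi), which pairs of present sums can have a common type B neighbour. I expect only a few rung-type exceptions, such as $z_4+z_5+z_k$, and must check whether these can be of type B. Lemma~\ref{lem:five-sums} gives at most four sum neighbours for each type C point. This yields a bound of the form $F\le b+\delta+4c$, with a small correction $\delta$ for double supplies. Comparing the two expressions for $F$, with $a=c+4$ and $b=6-2c$, should force $c=0$, $a=4$, $b=6$ and a specific $E$. I then enumerate the admissible $M$ up to the symmetries $1\leftrightarrow3$ and $4\leftrightarrow5$, in a small table as in Proposition~\ref{prop:reduction}, and discard all but one or two.

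\emph{Surviving configurations.} For what survives, the six type B points span a fixed number of unit distances, by the same degree count as in Lemma~\ref{lem:four-supply}. If that number is nine, Corollary~\ref{cor:six-prism} makes them a prism. The prism rigidity (degree three, pairwise distances less than $2$) should contradict the placement of $B_1,B_3$ and $B_4,B_5$ forced by Lemma~\ref{lem:rules}(iv). If needed I also pass to another centre, for example $z_2$ or a present ladder point, which again has the path-and-edge type by Lemma~\ref{lem:hex}, to obtain incompatible angle conditions as at the end of Lemma~\ref{lem:hex}.

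\emph{Main obstacle.} The hard step is the upper bound on $F$, that is, controlling type B points adjacent to two ladder sums. Here $z_5$ is not tied to a lattice, so Lemma~\ref{lem:eisenstein-circle} is unavailable. I would try first to handle these exceptions by showing that each forces an extra missing position at $z_4$ or $z_5$, so that the double supply is paid for in the count $F$.
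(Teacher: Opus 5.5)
\textbf{Gap: the supply count cannot force $c=0$.} Your plan rests on the claim that comparing $F=3a-2E$ with $F\le b+\delta+4c$ forces $c=0$. That step fails here. In Lemma~\ref{lem:four-supply} it worked because the candidate sums span at most three unit distances, so $F\ge 6+3c$. In the path-and-edge case the six points $s_{k4},s_{k5}$ form a ladder with seven edges. When many of them are present, the sum points need almost no outside supply.

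For example, $M=\{12,23,45\}$ satisfies all your bookkeeping conditions and gives $a=6$, $b=c=2$, $E=7$. Hence $F=4$, against a capacity of at least $b+4c=10$. Likewise $M=\{12,23,45,14\}$ gives $c=1$, $E=5$, $F=5\le 8$. No correction $\delta$ turns these into contradictions.

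What rules them out is the count in the opposite direction. The type B and C points carry total degree $4b+5c$ inside $O$, of which only $F$ goes to sum points. So they must span $6-3c+E$ unit distances among themselves.
\begin{itemize}
\item For $c=2$ this is seven unit distances among four points, impossible by (P3).
\item For $c=1$ the paper needs a genuinely geometric argument. It proves $D\notin\{5,7\}$ using (P3) and Lemma~\ref{lem:hex} applied at the type C point or at one of the type B points. It also gives a separate analysis of $w_1=z_1+z_4+z_5$ and $w_3=z_3+z_4+z_5$ for $M=\{14,35\}$.
\item Even for $c=0$ the global count does not fix $E$. The paper removes eight of the eleven patterns by the local rule (G1): $s_{24},s_{25}$ can receive non-ladder neighbours only from $B_2$ (at most one each) or from type C points. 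The remaining three patterns are treated individually, and only one of them goes through the prism.
\end{itemize}

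\textbf{Misdiagnosed obstacle.} Double supplies genuinely occur. For instance, $z_1+z_4+z_5$ is adjacent to $s_{14}$, $s_{15}$ and $s_{45}$, and in the case $M=\{14,35\}$ the paper is forced to place it among the type B points before a contradiction appears. So there is no uniform $\delta$ that is paid for by extra missing positions. The paper never bounds double supplies globally; it uses the per-sum rules (G1)--(G3), which come from Lemma~\ref{lem:rules}(iii)--(v).

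\textbf{Smaller points.}
\begin{itemize}
\item Your slot-counting reason for setting aside $s_{12}$ and $s_{23}$ proves nothing. Either one would give $z_2$ four consecutive hexagon neighbours: $z_3,0,z_1,s_{12}$, respectively $s_{23},z_3,0,z_1$. So both are absent by Lemma~\ref{lem:hex}, and only seven candidate sums remain.
\item $s_{12}$ and $s_{23}$ are not adjacent, since $|s_{12}-s_{23}|=|z_1-z_3|=\sqrt3$.
\item Your fallback of passing to $z_2$ is unjustified. Nothing forces $z_2$ to have the path-and-edge type, since $t(z_2)=2$ is possible.
\end{itemize}
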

	
	\begin{proof}
		After a rotation and, if necessary, a reflection, normalize
		\[
		z_1=1,\qquad z_2=\w,\qquad z_3=\w^2.
		\]
		We have $e=3$, hence $a=c+4$ and $b=6-2c$. Neither $z_4$ nor $z_5$ is a sixth root of unity. They are distinct from $1,\w,\w^2$ by hypothesis; moreover $\w^3$ and $\w^5$ are adjacent to $z_3$ and $z_1$, respectively, and if one of $z_4,z_5$ were $\w^4$, the other, being a unit vector at distance $1$ from it, would be $\w^3$ or $\w^5$. Thus none of
		\[
		14,24,34,15,25,35
		\]
		coincides with $0$ or a point of $Z$: such a coincidence would make $z_4$ or $z_5$ a sixth root. Also $|s_{45}|=\sqrt3$, so the same holds for $45$. The seven positions are also distinct by the uniqueness observation in Section~2.1. Moreover $s_{13}=z_2$, while $s_{12},s_{23}\notin S$: otherwise $z_2$ would have four consecutive hexagon neighbours, contrary to Lemma~\ref{lem:hex}.
		
		Let $M$ be the set of missing points among the seven displayed sums, $m=|M|$, and let $m_i$ count the members of $M$ containing $i$. Since $a=7-m$,
		\begin{equation}\label{eq:ladder-mc}
			m+c=3,\qquad b=2m.
		\end{equation}
		If $B_i$ is the set of type B points whose unique neighbour in $Z$ is $z_i$, then counting at $z_i$ gives
		\begin{equation}\label{eq:ladder-beta}
			|B_1|=1+m_1,\quad |B_2|=m_2,\quad |B_3|=1+m_3,
			\quad |B_4|=m_4-1,\quad |B_5|=m_5-1.
		\end{equation}
		Thus $m_4,m_5\ge1$. For $m=1,2,3$ there are respectively $1,15,33$ subsets $M$ satisfying these necessary conditions. Under the index relabellings $1\leftrightarrow3$ and $4\leftrightarrow5$, these give respectively $1,6,11$ classes; the relabellings preserve the local path-and-edge pattern. We examine all three values of $m$ below.
		
		The six sums $14,24,34,15,25,35$ form a $2\times3$ ladder:
		\begin{equation}\label{eq:ladder-edges}
			14\adj15,\quad24\adj25,\quad34\adj35,
			\qquad14\adj24\adj34,\qquad15\adj25\adj35.
		\end{equation}
		The candidate sum $45$ is isolated from the other sum points (Figure~\ref{fig:ladder}). These are all unit distances among the seven sums. Indeed, the common-index pairs are given by Lemma~\ref{lem:rules}(i), while a disjoint pair would contradict Lemma~\ref{lem:rules}(iii) using one of $z_1\adj z_2$, $z_2\adj z_3$, $z_4\adj z_5$. 
		\begin{figure}[htbp]
			\centering
			\begin{tikzpicture}[x=1cm,y=1cm]
				\draw[line width=0.6pt] (0,2) -- (0,1) -- (0,0);
				\draw[line width=0.6pt] (2.0,2) -- (2.0,1) -- (2.0,0);
				\draw[line width=0.6pt] (0,2) -- (2.0,2);
				\draw[line width=0.6pt] (0,1) -- (2.0,1);
				\draw[line width=0.6pt] (0,0) -- (2.0,0);
				\foreach \x/\y/\n in {0/2/14,0/1/24,0/0/34,2/2/15,2/1/25,2/0/35,3.6/1/45}{
					\filldraw[fill=white,draw=black,line width=0.5pt] (\x,\y) circle (0.27);
					\node[font=\small] at (\x,\y) {$\n$};
				}
			\end{tikzpicture}
			\caption{The unit-distance adjacency pattern of the seven candidate sum points in the path-and-edge case. The diagram is schematic, not a Euclidean realization of their positions; $45$ is isolated.}
			\label{fig:ladder}
		\end{figure}
		
		For a present sum point $\sigma$, let $\lambda(\sigma)$ be the number of its present ladder neighbours. Then it needs exactly
		\[
		n(\sigma)=3-\lambda(\sigma)
		\]
		neighbours of types B and C. Put
		\begin{equation}\label{eq:DE}
			D=\sum_{\sigma}n(\sigma)=3(7-m)-2E,
		\end{equation}
		where $E$ is the number of present ladder edges.
		
		We shall repeatedly use three local supply rules, all direct consequences of Lemma~\ref{lem:rules}.
		\begin{itemize}
			\item[(G1)] The point $s_{24}$ has no neighbour in $B_1\cup B_3\cup B_4\cup B_5$ and at most one in $B_2$; the same holds for $s_{25}$.
			\item[(G2)] The point $s_{14}$ has no neighbour in $B_2\cup B_5$ and at most one in each of $B_1,B_4$. For $s_{15}$ the excluded classes are $B_2,B_4$ and there is at most one neighbour in each of $B_1,B_5$; the statements for $s_{34},s_{35}$ follow by $1\leftrightarrow3$.
			\item[(G3)] No point of type B or C is adjacent to both $s_{14}$ and $s_{34}$, or to both $s_{15}$ and $s_{35}$.
		\end{itemize}
		For (G1), use Lemma~\ref{lem:rules}(iii) with the adjacent pairs $z_1\adj z_2$, $z_3\adj z_2$, $z_5\adj z_4$, and use (iv) for the remaining classes. Rule (G2) is the same argument with the indices shifted. Rule (G3) is Lemma~\ref{lem:rules}(v), since $z_1+z_3=z_2$.
		
		\emph{Case $m=3$.} Here $c=0$ and $b=6$. Up to the symmetries $1\leftrightarrow3$ and $4\leftrightarrow5$, there are eleven possible sets $M$. Rule (G1) immediately excludes eight of them; the required number of type B neighbours of the indicated central sum is larger than $|B_2|=m_2$:
		\[
		\begin{array}{c|c|c|c}
			M            & \sigma & n(\sigma) & |B_2| \\ \hline
			\{45,14,34\} & 24     & 2         & 0     \\
			\{45,14,15\} & 24     & 1         & 0     \\
			\{45,14,25\} & 24     & 2         & 1     \\
			\{45,14,35\} & 24     & 1         & 0     \\
			\{14,24,15\} & 25     & 2         & 1     \\
			\{14,24,35\} & 25     & 2         & 1     \\
			\{14,34,15\} & 25     & 1         & 0     \\
			\{14,34,25\} & 24     & 3         & 1
		\end{array}
		\]
		Notice that $M=\{45,14,25\}$ is the missing-sum pattern not explicitly treated among the subcases in the online copy of \cite{H} consulted here (see Remark~\ref{rem:missing-case}).
		
		Three patterns remain.
		
		If $M=\{14,24,25\}$, then $|B_1|=2$, $|B_3|=1$ and $B_5=\emptyset$. The point $s_{15}$ has no ladder neighbour and therefore needs three type B neighbours. By (G2) it has none in $B_2\cup B_4$, at most one in $B_1$, and there is only one point in $B_3$. Thus at most two are available, a contradiction.
		
		If $M=\{45,24,25\}$, then $|B_1|=|B_3|=|B_4|=|B_5|=1$ and $|B_2|=2$. Each of $s_{14},s_{15},s_{34},s_{35}$ needs two type B neighbours, for a total of eight incidences. The two points of $B_2$ supply none of them. By (G3) the points of $B_4$ and $B_5$ supply at most one each, and the points of $B_1$ and $B_3$ at most two each. Thus at most six incidences are possible.
		
		Finally let $M=\{45,14,24\}$. Then $D=6$, so the six type B points span $(24-6)/2=9$ unit distances. By Corollary~\ref{cor:six-prism} they form a prism, and hence every one of them has exactly one neighbour among the sum points. Formula~\eqref{eq:ladder-beta} gives $|B_4|=2$. By Lemma~\ref{lem:rules}(iii), neither point of $B_4$ is adjacent to $15,25,35$, so both must be adjacent to $34$. But the unit circles centred at $z_4$ and $s_{34}$ have only two common points; one is the position $z_2+z_4=s_{24}$, which, if occupied by a point of $S$, would have the two neighbours $z_2,z_4$ in $Z$ and hence could not be of type B. Thus at most one common point can be of type B, a contradiction.
		
		\emph{Case $m=2$.} Now $c=1$ and $b=4$. Let $Q$ be the point of type C, let $k$ be the number of unit distances among the four type B points, and let $x$ be the number of neighbours of $Q$ among them. Since $Q$ has $5-x$ sum neighbours, counting degrees at the four type B points gives
		\begin{equation}\label{eq:kx-short}
			2x=21-D-2k.
		\end{equation}
		We first claim that
		\begin{equation}\label{eq:D-not-57}
			D\notin\{5,7\}.
		\end{equation}
		By (P3), $k\le5$, and clearly $x\le4$. Equation~\eqref{eq:kx-short} leaves only
		\[
		\begin{array}{c|c}
			D=5 & (k,x)=(4,4),(5,3),       \\
			D=7 & (k,x)=(3,4),(4,3),(5,2).
		\end{array}
		\]
		If $x=4$, the four type B points lie on the unit circle about $Q$, so (P2) gives $k\le3$; equality would make them four consecutive hexagon neighbours of $Q$, contrary to Lemma~\ref{lem:hex}. If $k=5$, the four type B points form a diamond, say with unit triangles $pqs$ and $rqs$. An external point cannot be adjacent to three vertices of the diamond: three containing a unit triangle are impossible, while the other triples contain the opposite pair $p,r$, whose only common unit-distance points are $q,s$. This excludes $(5,3)$. For $(5,2)$, the two neighbours of $Q$ cannot be $p,r$ or $q,s$, so by symmetry take $Q\adj p,q$. Put $d=p-q$ and write $s-q=d\w^\varepsilon$. Then $r-q=d\w^{2\varepsilon}$ and the second common unit-distance point of $p,q$ is $Q-q=d\w^{-\varepsilon}$. Thus $q$ has the four consecutive neighbours $Q,p,s,r$, again impossible.
		
		It remains $(k,x)=(4,3)$. By (P3), the four type B points form either a rhombus or a unit triangle $pqr$ with a pendant point $s\adj p$. In a rhombus any three vertices contain an opposite pair, whose two common unit-distance points are the other two vertices, so $Q$ cannot be external. In the triangle-pendant case the three neighbours of $Q$ cannot be $p,q,r$. If $Q\adj s,q,r$, then $Q=q+r-p$; since $s$ is adjacent to both $p$ and $Q$, (P1) forces $s\in\{q,r\}$. If $Q\adj s,p,q$, then $Q=p+q-r$ and (P1) gives $s=2p-r$, so $p$ has four consecutive hexagon neighbours $r,q,Q,s$; the case $Q\adj s,p,r$ is symmetric. This proves \eqref{eq:D-not-57}.
		
		There are six sets $M$ up to symmetry. Three have $D\in\{5,7\}$:
		\[
		M=\{45,14\}\ (D=5),\qquad M=\{45,24\}\ (D=7),\qquad M=\{14,15\}\ (D=7),
		\]
		and are excluded by the claim.
		
		Let $M=\{14,25\}$. Then $|B_1|=2$, $|B_2|=|B_3|=1$ and $B_4=B_5=\emptyset$. We have $n(s_{15})=3$ and $n(s_{35})=2$. The point of $B_2$ is adjacent to neither of these sums by (G2), and by (G3) each of the remaining four points of types B and C is adjacent to at most one of them. Thus five required incidences would have to be supplied by at most four.
		
		Let $M=\{24,25\}$. Then $|B_1|=|B_3|=1$ and $|B_2|=2$. Each of $s_{14},s_{15},s_{34},s_{35}$ needs two neighbours of types B and C. The points of $B_2$ supply none by (G2), while each of the remaining three points supplies at most two by (G3). Hence eight required incidences would have to be supplied by at most six.
		
		Finally let $M=\{14,35\}$. Then $|B_1|=|B_3|=2$ and $B_2=B_4=B_5=\emptyset$. By (G1), the unique non-ladder neighbour required by each of $s_{24},s_{25}$ must be $Q$. Lemma~\ref{lem:rules}(ii) gives
		\[
		Q=z_2+z_4+z_5,
		\]
		so $Q\adj s_{45}$. Moreover $|Q-s_{15}|=|z_3+z_4|\ne1$ and $|Q-s_{34}|=|z_1+z_5|\ne1$, because $s_{34},s_{15}$ are sum points and therefore are not adjacent to $0$. Consequently each point of
		\[
		X=\{Q,s_{15},s_{34},s_{45}\}
		\]
		has exactly two neighbours in the four-point set $B=B_1\cup B_3$. Counting degrees at $B$ shows that $B$ spans four unit distances.
		
		Put
		\[
		w_1=z_1+z_4+z_5,\qquad w_3=z_3+z_4+z_5.
		\]
		By (P1) and Lemma~\ref{lem:rules}(ii), the common unit-distance points of the following pairs are
		\[
		\begin{array}{c|c}
			Q,s_{45}      & w_1,w_3     \\
			s_{45},s_{15} & z_5,w_1     \\
			s_{45},s_{34} & z_4,w_3     \\
			Q,s_{15}      & s_{25},w_1  \\
			Q,s_{34}      & s_{24},w_3.
		\end{array}
		\]
		For $y\in B$, let $d(y)$ be the number of its neighbours in $X$; then $\sum_{y\in B}d(y)=8$. If $y\notin\{w_1,w_3\}$, the table gives $d(y)\le2$, and $d(y)\le1$ when $y\adj Q$ or $y\adj s_{45}$. The point $w_1$ is adjacent to $Q,s_{45},s_{15}$ but not to $s_{34}$; indeed $|z_1-z_3+z_5|=1$ would force $z_5\in\{z_3,-z_1\}$, contrary to the path-and-edge hypothesis. Thus $d(w_1)=3$ if $w_1\in B$, and symmetrically $d(w_3)=3$ if $w_3\in B$.
		
		If neither $w_1$ nor $w_3$ belongs to $B$, the two neighbours of $Q$ and the two neighbours of $s_{45}$ in $B$ are four distinct points with $d=1$, so $\sum d=4$. If exactly one of $w_1,w_3$ belongs to $B$, it is a neighbour of both $Q$ and $s_{45}$; their other neighbours in $B$ are distinct points with $d=1$, and hence $\sum d\le3+1+1+2=7$. Therefore $w_1,w_3\in B$. By (G1), no point of $B=B_1\cup B_3$ is adjacent to $s_{24}$ or $s_{25}$. Hence each of $w_1,w_3$ has exactly the three neighbours in $X$ listed above outside $B$, and therefore exactly one neighbour inside $B$. Since $|w_1-w_3|=|z_1-z_3|=\sqrt3$, these two vertices are nonadjacent, and a four-vertex graph in which two nonadjacent vertices have degree one has at most three edges. This contradicts the four edges already counted in $B$.
		
		\emph{Case $m=1$.} The conditions $m_4,m_5\ge1$ force $M=\{45\}$. Then $c=2$, $b=2$, and \eqref{eq:DE} gives $D=4$. The two type B and two type C points have total degree
		\[
		2\cdot4+2\cdot5=18
		\]
		inside $O$. After the four edges to the sum points are removed, these four points would span $(18-4)/2=7$ unit distances, contradicting (P3).
		
		All possibilities are excluded.
	\end{proof}
	
	\begin{proof}[Proof of Theorem~\ref{thm:main}]
		Suppose that a $5$-regular set $S$ has $|S|\le16$. By Proposition~\ref{prop:reduction}, $|S|=16$. Corollary~\ref{cor:three} gives a point with $t\ge3$, which contradicts Lemma~\ref{lem:hex} or Lemma~\ref{lem:case6}. Hence no $5$-regular set has at most $16$ points. Since every $5$-regular graph has even order, $p(5)\ge18$. Together with the upper bound $p(5)\le18$, this proves the theorem.
	\end{proof}

	\section{Remarks}\label{sec:remarks}
	
	\begin{remark}\label{rem:missing-case}
		Correspondence with \cite{H}. Lemma~\ref{lem:two} replaces Cases 1 and 2, and Corollary~\ref{cor:three} replaces Case 3. Lemmas~\ref{lem:fourtri}--\ref{lem:hex} replace Cases 4 and 5 by direct local incidence estimates. Lemma~\ref{lem:case6} treats Case 6 by the three values $m=3,2,1$ and local rules on a $2\times3$ ladder. In the online PDF copy of \cite{H} available to us, the numbering passes directly from 6.6 to 6.8 on pages 248--249. These two subcases have missing-sum sets $\{45,14,15\}$ and $\{45,14,35\}$, respectively, in our notation. We could not locate an explicit treatment of the intervening pattern $M=\{45,14,25\}$ in that copy. We have not examined a physical copy of the 1987 volume and cannot determine whether the apparent omission was present in the original publication or arose during digitization. Our $m=3$ table excludes the configuration without a further appeal to Fact~\ref{fact:six}.
	\end{remark}
	
	\begin{remark}
		The only external finite enumeration used in the proof is Harary's 1969 atlas of the six-vertex graphs; the short geometric elimination needed from that atlas is included in Fact~\ref{fact:six}. No result subsequent to Harborth's argument is used.
	\end{remark}

	\section{Further questions}\label{sec:questions}
	
	The next case is $p(6)$. A generic Minkowski sum of three unit triangles gives $p(6)\le p(2)^3=27$. Is the bound sharp? Equivalently, can one exclude a set of at most $26$ points in which every point has exactly six unit-distance neighbours?
	
	Can the local incidence counts, circle-intersection identities and supply rules used here be extended to this problem? Unlike in (P2), six neighbours may occupy an entire regular hexagon, so local reductions must account for cycles as well as paths.
	
	What can be said about the structure at $p(5)=18$? The generic Minkowski sum $\{0,u\}+\Delta_1+\Delta_2$ of a unit segment and two unit triangles has unit-distance graph $K_2\mathbin{\square}K_3\mathbin{\square}K_3$, with $t(v)=2$ at every vertex and $12$ triangles altogether. Which other local triangle counts and neighbourhood patterns can occur in an $18$-point $5$-regular set? Could the constraints developed here help classify these extremal configurations? The $16$-point exclusions do not answer these questions.

\end{document}